\theoremstyle{plain}
\newtheorem{thm}{Theorem}[section]
\newtheorem{lemma}[thm]{Lemma}
\newtheorem{prop}[thm]{Proposition}
\theoremstyle{definition}
\newtheorem{defn}[thm]{Definition}
\newtheorem{exa}[thm]{Example}
\newcommand{\pol}{P} 
\newcommand{\dis}{s(\Sigma)}
\begin{document}

\title[Quasi-trees]{A quasi-tree expansion of the Krushkal polynomial}
\author{CLARK BUTLER}

\begin{abstract}
We introduce a generalization of the Krushkal polynomial to nonorientable surfaces, and prove that this polynomial has a natural quasi-tree expansion. This generalized Krushkal polynomial contains the Bollob\'as-Riordan polynomial of a (possibly nonorientable) ribbon graph as a specialization. The quasi-tree expansion proven here then extends the recent quasi-tree expansions of the Bollob\'as-Riordan polynomial deduced in the oriented case by A. Champanerkar et al. and in the more general unoriented case by E. Dewey and F. Vignes-Tourneret. The generalized Krushkal polynomial also contains the Las Vergnas polynomial of a cellulation of a surface as a specialization; we use this fact to deduce a quasi-tree expansion for the Las Vergnas polynomial. 
\end{abstract}
 
\maketitle 

\numberwithin{equation}{section}
\section{Introduction}
The classical Tutte polynomial was first introduced as a sum over the spanning trees of a graph \cite{Tu}. B. Bollob\'as and O. Riordan extended the Tutte polynomial to ribbon graphs in \cite{BR}, and proved a spanning tree expansion for this extended polynomial. However, recent work on the Bollob\'as-Riordan polynomial has suggested that one-face spanning subgraphs (which are known as \emph{quasi-trees}) are more natural to consider than spanning trees in the case of ribbon graphs. A. Champanerkar, I. Kofman, and N. Stoltzfus gave an expansion of the Bollob\'as-Riordan polynomial over the quasi-trees of an orientable ribbon graph \cite{CKS1}. This expansion was extended to non-orientable ribbon graphs independently by F. Vignes-Tourneret \cite{VT2} and E. Dewey \cite{De}. In \cite{VT2} this quasi-tree expansion was applied to derive an expansion of the Kaufmann Bracket polynomial of a virtual link diagram over connected states. Quasi-trees have seen use outside of graph polynomials: in \cite{CKS2} the quasi-trees of a ribbon graph associated to a link diagram are used to construct the Khovanov homology. 

V. Krushkal introduced a new polynomial invariant of graphs embedded into orientable surfaces in \cite{Kru}. This polynomial satisfies a natural duality relation in the case of a cellularly embedded graph, as well as some mild contraction-deletion properties, and contains the Bollob\'as-Riordan polynomial as a specialization. It was shown in \cite{ACEMS} that the Krushkal polynomial also contains the Las Vergnas polynomial of a cellularly embedded graph as a specialization, and that the specializations giving the Bollob\'as-Riordan polynomial and Las Vergnas polynomial are independent. The Krushkal polynomial has also been extended to triangulations of even-dimensional oriented manifolds, for which a similar duality property holds \cite{KrRe}.

This paper consists of four major sections. In the first, a generalization of the Krushkal polynomial to graphs embedded into any compact (possibly non-orientable) surface is given. We show that this generalized Krushkal polynomial retains all of the familiar properties of the Krushkal polynomial, including the duality relation. This generalization is motivated by a desire to capture the Bollob\'as-Riordan polynomial for all ribbon graphs (not just orientable graphs) as a specialization of a Krushkal-like polynomial.

In the second section, we review the essential properties of the quasi-tree grouping of the family of spanning subgraphs of a ribbon graph with a total ordering on the edges, following \cite{VT2}. The third section contains the main result of the paper. A quasi-tree expansion for the generalized Krushkal polynomial is formulated and proven. The natural duality of the generalized Krushkal polynomial forces this expansion to take a particularly nice form. In the fourth and final section, we apply the main result to obtain a quasi-tree expansion for the Krushkal polynomial and to rederive the Bollob\'as-Riordan quasi-tree expansions given by \cite{CKS1}, \cite{De}, and \cite{VT2}, using the fact that the generalized Krushkal polynomial contains the Bollob\'as-Riordan polynomial as a specialization. Lastly, we extend the main theorem of \cite{ACEMS} to the generalized Krushkal polynomial, thus proving that the generalized Krushkal polynomial contains the Las Vergnas polynomial as a specialization as well, and use this fact to derive a quasi-tree expansion for the Las Vergnas polynomial. 
  
I would like to thank Sergei Chmutov for many invaluable discussions and suggestions which improved this paper greatly.  
\section{The Generalized Krushkal Polynomial}

\subsection{The Krushkal Polynomial}
Let $i:G \longrightarrow \Sigma$ be an embedding of a graph $G$ into a compact orientable surface $\Sigma$. For a spanning subgraph $F$ of $G$, we will define a collection of topological invariants given by the induced embedding of $F$ into $\Sigma$. In general, for any space $X$, we will let $c(X)$ denote the number of connected components of $X$. The restriction of $i$ to $F$ induces a homomorphism $i_{*}$ of the corresponding first homology groups with coefficients in $\mathbb{R}$. Define
\begin{equation}
k(F) = \text{dim}( \text{ker}(i_{*}: H_{1}(F;\mathbb{R}) \longrightarrow H_{1}(\Sigma; \mathbb{R})))
\end{equation}
For a compact orientable surface $M$, let $g(M)$ denote the genus of $M$. Let $\mathcal{F}$ be a regular neighborhood of the image $i(F)$ of the subgraph $F$ in $\Sigma$. The boundary of $\mathcal{F}$ is a disjoint union of circles. We may obtain a surface $\mathcal{S}(F)$ by gluing disks to each of the boundary components of $\mathcal{F}$. Let $\mathcal{S}^{\perp}(F)$ be the surface obtained by gluing in disks to the boundary components of the complement of $\mathcal{F}$ in $\Sigma$. Define
\begin{equation}\label{s}
s(F) = 2g(\mathcal{S}(F)) 
\end{equation}
\begin{equation}\label{sp}
s^{\perp}(F) = 2g(\mathcal{S}^{\perp}(F))
\end{equation}
The Krushkal polynomial is defined as
\begin{defn}\label{krupol}(\cite{Kru})
\[
P_{G,\Sigma}(X,Y,A,B) = \sum_{F \subseteq G} X^{c(F)-c(G)}Y^{k(F)}A^{s(F)/2}B^{s^{\perp}(F)/2}
\]
where the sum ranges over all spanning subgraphs $F$ of $G$. 
\end{defn}

In \cite{ACEMS} it was shown that 
\begin{equation}\label{connker}
k(F) = c(\Sigma \backslash F) - c(\Sigma)
\end{equation}
This identity may also be deduced inductively by the observation that the deletion of a homologically trivial loop separates a surface into two components.

If $\Sigma - i(G)$ is a disjoint union of disks, we say that $i:G \longrightarrow \Sigma$ is a \emph{cellulation} of $\Sigma$. We generally will not refer to the embedding $i$ explicitly and will simply identify $G$ with its image in $\Sigma$. Thus we will speak of $G$ as being a cellulation of $\Sigma$. For a cellulation $G$ of $\Sigma$, we may construct a new cellulation $G^{*}$ of $\Sigma$ as follows: place one vertex in each connected component of $\Sigma - G$, and for each edge $e$ of $G$, connect the vertices of $G^{*}$ corresponding to the components of $\Sigma - G$ on either side of $e$ by an edge $e^{*}$ which cuts $e$ transversely. $G^{*}$ is called the \emph{Poincar\'e dual} of $G$. $P_{G,\Sigma}$ behaves well with respect to Poincar\'e duality: 
\begin{thm}\label{krudual}(\cite{Kru})
\[
P_{G,\Sigma}(X,Y,A,B) = P_{G^{*},\Sigma}(Y,X,B,A)
\]
\end{thm}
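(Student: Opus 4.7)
The plan is to set up a natural bijection between spanning subgraphs of $G$ and spanning subgraphs of $G^*$, and to show that under this bijection the four exponents in Definition \ref{krupol} transform in precisely the way dictated by the statement. For a spanning subgraph $F \subseteq G$, define the \emph{dual spanning subgraph} $F^* \subseteq G^*$ to consist of all vertices of $G^*$ together with the dual edges $\{e^* : e \in E(G) \setminus F\}$. Under the identification $(G^*)^* = G$, the map $F \mapsto F^*$ is an involution on spanning subgraphs, so the theorem reduces to four term-by-term identities: $c(F) - c(G) = k(F^*)$, $k(F) = c(F^*) - c(G^*)$, $s(F) = s^{\perp}(F^*)$, and $s^{\perp}(F) = s(F^*)$.

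The topological input common to all four identities is the observation that regular neighborhoods $\mathcal{F}$ of $F$ and $\mathcal{F}^*$ of $F^*$ can be chosen as complementary surfaces in $\Sigma$, so that $\mathcal{F}^* = \overline{\Sigma \setminus \mathcal{F}}$ and the two share a common boundary. To see this, note that each face of $G$ contains a unique vertex of $G^*$ and that $F^*$ consists of exactly the dual edges crossing $E(G)\setminus F$; thickening $F^*$ therefore sweeps out the faces of $G$ together with bridges across the edges of $G$ not in $F$, which is precisely $\overline{\Sigma \setminus \mathcal{F}}$.

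Given this complementary picture, the surface $\mathcal{S}^{\perp}(F)$ obtained by capping off the boundary circles of $\overline{\Sigma \setminus \mathcal{F}}$ is, by construction, the same as $\mathcal{S}(F^*)$, yielding $s^{\perp}(F) = s(F^*)$; reversing the roles of $F$ and $F^*$ gives the fourth identity. For the connectivity identities I appeal to (\ref{connker}): since $\Sigma \setminus F^*$ deformation retracts onto $\mathrm{int}(\mathcal{F})$ and hence onto $F$, one has $c(\Sigma \setminus F^*) = c(F)$, while $c(\Sigma) = c(G) = c(G^*)$ because both $G$ and $G^*$ cellulate $\Sigma$. Combining these gives $k(F^*) = c(F) - c(G)$, and the remaining identity follows by swapping the roles of $G$ and $G^*$.

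The main obstacle is making the complementary-neighborhood argument precise; it is geometrically natural but requires choosing regular neighborhoods for $F$ and $F^*$ coming from a single ``chessboard'' decomposition of $\Sigma$ induced by the overlay of $G$ with $G^*$. Once this is pinned down, the theorem follows by substitution: rewriting $P_{G^*,\Sigma}(Y,X,B,A)$ as a sum indexed by $F \subseteq G$ via the bijection $F \mapsto F^*$ and applying the four identities reproduces $P_{G,\Sigma}(X,Y,A,B)$ term by term.
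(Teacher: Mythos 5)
Your proposal is correct and follows essentially the same route as the paper: the paper proves the generalized version (Theorem \ref{dual}) by observing that $F$ and $F^{*}$ have complementary regular neighborhoods, which gives $\mathcal{S}(F)\cong\mathcal{S}^{\perp}(F^{*})$, $\mathcal{S}^{\perp}(F)\cong\mathcal{S}(F^{*})$, and $c(\Sigma\backslash F)=c(F^{*})$, and then matches monomials term by term under the involution $F\mapsto F^{*}$. Your only extra step --- invoking \eqref{connker} to convert $k(F)$ into the connectivity count $c(\Sigma\backslash F)-c(\Sigma)$ --- is exactly how the paper reconciles the original orientable definition with the generalized one, so the two arguments coincide.
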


Throughout this paper we will be concerned principally with cellulations of surfaces. In this case the language of embedded graphs and ribbon graphs is interchangeable. A \emph{ribbon graph} is a pair $(G,S)$ in which $G$ is a graph, $S$ is a surface with boundary, and the inclusion $G \longrightarrow S$ is a homotopy equivalence. We will think of ribbon graphs as having a handle decomposition in which the 0-handles correspond to neighborhoods of the vertices and the 1-handles correspond to neighborhoods of the edges. This definition is taken from \cite{Kru}, though we could have started with this handle decomposition of a surface with boundary as a definition of a ribbon graph; see \cite{BR}, \cite{Ch} for more details on this and other equivalent definitions of ribbon graphs. 

Given a cellulation $G$ of a surface $\Sigma$, we obtain a ribbon graph by taking a regular neighborhood of $G$ in $\Sigma$. Conversely, given a ribbon graph $(G,S)$, we obtain a closed surface $\Sigma$ by gluing discs to the boundary components of $S$. The induced embedding of $G$ in the resulting surface is then a cellulation. We will thus refer to $G$ interchangeably as a ribbon graph and as an embedded graph when $G$ is a cellulation. Poincar\'e duality of ribbon graphs takes the form of the following construction: For a ribbon graph $G$, we glue a disc to each of the boundary components of $G$ (which we take to be the vertices of $G^{*}$), and delete the vertices of $G$.

\subsection{The Generalized Krushkal Polynomial}
We seek now a natural extension of the Krushkal polynomial to graphs embedded into nonorientable surfaces.  When $\Sigma$ is orientable and $F$ is a spanning subgraph of a graph $G$ embedded in $\Sigma$, we note that the invariants $s$, $s^{\perp}$ are given by the formulas 
\begin{equation}
s(F) = 2c(F) - \chi(\mathcal{S}(F))
\end{equation}
\begin{equation}
s^{\perp}(F) = 2c(\Sigma \backslash F) - \chi(\mathcal{S}^{\perp}(F))
\end{equation}

where $\chi$ is the Euler characteristic of the surface. These expressions are defined even when $\Sigma$ is not assumed to be orientable. We will thus take these expressions to \emph{define} the invariants $s$ and $s^{\perp}$ in the general case when $\Sigma$ may not be orientable, so that the surfaces $\mathcal{S}(F)$ and $\mathcal{S}^{\perp}(F)$ may not be orientable. When either $\mathcal{S}(F)$ or $\mathcal{S}^{\perp}(F)$ are nonorientable, the corresponding invariant $s(F)$ or $s^{\perp}(F)$ gives the nonorientable genus of the corresponding surface.




Now we extend the Krushkal polynomial to nonorientable surfaces by the formula, 
\begin{equation}
\pol_{G,\Sigma}(X,Y,A,B) = \sum_{F \subseteq G} X^{c(F)-c(G)}Y^{c(\Sigma \backslash F) - c(\Sigma)}A^{s(F)/2}B^{s^{\perp}(F)/2}
\end{equation}
which is well-defined regardless of whether or not $\Sigma$ is orientable. This agrees with Definition \ref{krupol} when $\Sigma$ is orientable, by equation \eqref{connker}. When the surfaces $\mathcal{S}(F)$ and $\mathcal{S}^{\perp}(F)$ are orientable, the corresponding invariants $s(F)$ and $s^{\perp}(F)$ are always even, and $\pol_{G,\Sigma}$ is a polynomial in the variables $X$, $Y$, $A$, and $B$. However, if $\mathcal{S}(F)$ is nonorientable, then $\chi(\mathcal{S}(F))$ may be odd, and consequently $s(F)$ may be odd (and likewise for $s^{\perp}$). Thus, in general $\pol_{G,\Sigma}$ is a polynomial in $X$, $Y$, $A^{1/2}$, and $B^{1/2}$.   
 
The rest of this section is devoted to proving that $\pol_{G,\Sigma}$ retains all of the properties of the Krushkal polynomial for orientable surfaces, when $\Sigma$ is allowed to be nonorientable. The proofs are almost entirely analagous to those in \cite{Kru}, except they have been reformulated in combinatorial language. In fact, our extension of the Krushkal polynomial entirely avoids the language of homology. It can be recast in homological language analagous to Krushkal's, except with $\mathbb{Z}/2\mathbb{Z}$ coefficients instead of coefficients in $\mathbb{R}$. Some remarks on this are made at the end of this section. 

The following proposition states some of the basic properties of $P_{G, \Sigma}$. The proofs of these properties are essentially identical to those in \cite{Kru}, except that we have replaced homological triviality with the equivalent notion (which Krushkal utilizes) that deletion of the loop separates the surface into two components. The proofs are thus omitted.

\begin{prop}
\begin{enumerate}
\item
If $e$ is an edge of $G$ which is neither a loop nor a bridge, then $\pol_{G} = \pol_{G \backslash e} + \pol_{G/e}$
\item
If $e$ is a bridge in $G$, then $\pol_{G} = (1+X)\pol_{G/e}$
\item
If $e$ is a loop in $G$ such that $\Sigma - e$ has two components, then $\pol_{G} = (1+Y)\pol_{G \backslash e}$
\item
If $\Sigma = \Sigma_{1} \sqcup \Sigma_{2}$, $G = G_{1} \sqcup G_{2}$, $G_{1} \subseteq \Sigma_{1}$, $G_{2} \subseteq \Sigma_{2}$, and $\Sigma_{1}$,  $\Sigma_{2}$ are disjoint surfaces, then $\pol_{G,\Sigma} = \pol_{G_{1},\Sigma_{1}} \pol_{G_{2}, \Sigma_{2}}$
\end{enumerate}
 \end{prop}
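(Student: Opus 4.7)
The plan is to prove all four identities by the standard deletion--contraction split: partition the spanning subgraphs of $G$ according to whether they contain the distinguished edge $e$, match each subsum with a sum over the spanning subgraphs of $G\backslash e$ or of $G/e$ (for parts (1)--(3)) or with a product (for (4)), and verify that the four exponents $c(F)-c(G)$, $c(\Sigma\backslash F)-c(\Sigma)$, $s(F)/2$, and $s^\perp(F)/2$ transform as the identity demands.

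For (1), with $e$ neither a loop nor a bridge, all four exponents are preserved under both the inclusion $F\not\ni e \mapsto F \subseteq G\backslash e$ and the contraction $F\ni e \mapsto F/e \subseteq G/e$: the $c$-invariants are preserved since $e$ is not a loop or a bridge, and $s$, $s^\perp$ are preserved because contracting a non-loop edge is a local handle cancellation leaving $\mathcal{S}(F)$ and $\mathcal{S}^\perp(F)$ homeomorphic. For (2), with $e$ a bridge, the $F\ni e$ subsum again matches $P_{G/e}$ exactly, but for $F\not\ni e$ the bridge condition forces the two endpoints of $e$ into distinct components of $F$; identifying them in $G/e$ therefore drops $c(F)$ by one, while an Euler-characteristic bookkeeping on the regular neighborhoods shows $s$ and $s^\perp$ are still preserved. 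This accounts for the extra factor of $X$, yielding $(1+X)P_{G/e}$. Part (3) is the Poincar\'e dual: the hypothesis that $\Sigma\backslash e$ has two components is the combinatorial surrogate for the homological-triviality condition used in \cite{Kru}, and it implies both that the ribbon band of $e$ is untwisted and that adjoining $e$ to any $F_0 \subseteq G\backslash e$ splits one face of the complement into two, raising $c(\Sigma\backslash F)$ by one while leaving the other three exponents unchanged (attaching an untwisted band to $\mathcal{F}_0$ adds one boundary component and decreases $\chi(\mathcal{F}_0)$ by one, so $\chi(\mathcal{S}(F_0))$ is preserved, and the dual statement handles $\mathcal{S}^\perp$). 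Hence $P_G = (1+Y)P_{G\backslash e}$.

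Part (4) is immediate from additivity: a spanning subgraph of $G_1\sqcup G_2$ factors as a pair $(F_1, F_2)$, and each of $c$, $s$, $s^\perp$ is additive on disjoint unions, so the two polynomials multiply. The main obstacle, and the place where the argument departs from \cite{Kru}, is verifying the $s$ and $s^\perp$ preservation claims via combinatorial/topological hypotheses on the surface rather than via real-coefficient homology; in each case one reduces to Euler-characteristic and boundary-component bookkeeping on $\mathcal{F}$ using the formula $\chi(\mathcal{S}(F)) = \chi(\mathcal{F}) + b(\mathcal{F})$ (and the dual formula for $\mathcal{S}^\perp$) together with the defining expressions $s(F) = 2c(F) - \chi(\mathcal{S}(F))$ and $s^\perp(F) = 2c(\Sigma\backslash F) - \chi(\mathcal{S}^\perp(F))$.
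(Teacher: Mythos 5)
The paper itself gives no proof of this proposition---it explicitly defers to Krushkal's deletion--contraction arguments, with ``homologically trivial loop'' replaced by the equivalent condition that the loop separates $\Sigma$---and your sketch reconstructs exactly that argument: split the spanning subgraphs according to whether they contain $e$, and track the four exponents under deletion, contraction, or disjoint union. The structure and all four conclusions are correct, and parts (1), (2), and (4) are fine as written.

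One step in part (3) is justified too weakly. You attribute the count $bc(\mathcal{F}) = bc(\mathcal{F}_0) + 1$ solely to the band of $e$ being untwisted, but an untwisted band whose two feet land on \emph{different} boundary circles of $\mathcal{F}_0$ merges them, giving $bc(\mathcal{F}) = bc(\mathcal{F}_0) - 1$ and hence $s(F) = s(F_0) + 2$, which would introduce a spurious factor of $A$. (This really occurs for a two-sided non-separating loop: on the torus, take $F_0$ a vertex with one non-separating loop, so $\mathcal{F}_0$ is an annulus, and let $e$ be the dual loop; its untwisted band joins the two boundary circles and $s$ jumps by $2$.) The separating hypothesis must be invoked a second time here: $\partial\mathcal{F}_0$ meets $e$ only at the two points where $e$ exits the vertex disk, each arc of $\partial\mathcal{F}_0$ between consecutive such points lies in a single component of $\Sigma \setminus e$, and crossing $e$ switches components since $e$ separates; hence the boundary circle through one foot of the band must cross $e$ again and therefore contains the other foot. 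With both feet on one circle and the band untwisted, attaching it splits that circle into two, and your Euler-characteristic bookkeeping closes as claimed.
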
 
 



 
 For a graph $F$, let $e(F)$ denote the number of edges in $F$, and let $v(F)$ denote the number of vertices in $F$. The \emph{nullity} of $F$ is defined by $n(F):= e(F) - v(F) + c(F)$. For a graph $G$, the Tutte polynomial $T_{G}$ of $G$ is given by  
 \begin{equation}
 T_{G}(X,Y) = \sum_{F \subseteq G} X^{c(F) - c(G)}Y^{n(F)}
 \end{equation}
 
 This is a particular normalization of the Tutte polynomial, chosen both so that its relation with $\pol_{G}$ will be as simple as possible and to make the formulas in Section \ref{expan} as simple in appearance as possible. 
 
 We now think of $G$ as being embedded in a compact surface $\Sigma$. We fix some notation and concepts which will be used throughout the rest of this paper. For a spanning subgraph $F$ of $G$, let $bc(F)$ denote the number of boundary components of the regular neighborhood $\mathcal{F}$ of $F$. Since $F$ is a cellulation of $\mathcal{S}(F)$, and the components of $\mathcal{S}(F) - F$ correspond to boundary components of $\mathcal{F}$, we deduce that 
 \begin{equation}
 \chi(\mathcal{S}(F)) = v(F) - e(F) + bc(F)
 \end{equation}
 If $G$ is a cellulation of $\Sigma$, we will denote by $F^{*}$ the spanning subgraph of the dual $G^{*}$ whose edges consist of all edges in $G^{*}$ which do not intersect an edge in $F$. Then $F^{*}$ is a cellulation of $\mathcal{S}^{\perp}(F)$. Thus we deduce 
  \begin{equation}
 \chi(\mathcal{S}^{\perp}(F)) = v(F^{*}) - e(F^{*}) + bc(F^{*})
 \end{equation}
 
 Define $ \dis = 2c(\Sigma) - \chi(\Sigma)$. 
 
 We will now show that the Tutte polynomial of the graph $G$ can be recovered from $\pol_{G,\Sigma}$.
 
  \begin{thm}\label{Tutte}
 \[
 T_{G}(X,Y) = Y^{\dis/2}\pol_{G,\Sigma}(X,Y,Y,Y^{-1})
 \]
 \end{thm}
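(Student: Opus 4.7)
The plan is a direct computation: substitute $A = Y$, $B = Y^{-1}$ into the definition of $\pol_{G,\Sigma}$, multiply by $Y^{\dis/2}$, and verify that the resulting exponent of $Y$ on each term is exactly the nullity $n(F) = e(F) - v(F) + c(F)$, so that the sum becomes $T_{G}(X,Y)$ term by term. After substitution the combined exponent of $Y$ on the term indexed by $F$ is
\[
c(\Sigma\setminus F) - c(\Sigma) + \frac{s(F) - s^{\perp}(F) + \dis}{2}.
\]
Using $s(F) = 2c(F) - \chi(\mathcal{S}(F))$, $s^{\perp}(F) = 2c(\Sigma\setminus F) - \chi(\mathcal{S}^{\perp}(F))$, and $\dis = 2c(\Sigma) - \chi(\Sigma)$, the component-count terms cancel and the exponent collapses to
\[
c(F) + \tfrac{1}{2}\bigl(\chi(\mathcal{S}^{\perp}(F)) - \chi(\mathcal{S}(F)) - \chi(\Sigma)\bigr).
\]
What remains is to establish the Euler-characteristic identity
\[
\chi(\mathcal{S}^{\perp}(F)) - \chi(\mathcal{S}(F)) - \chi(\Sigma) = 2e(F) - 2v(F).
\]

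For this I would invoke the cellulation formulas $\chi(\mathcal{S}(F)) = v(F) - e(F) + bc(F)$ and $\chi(\mathcal{S}^{\perp}(F)) = v(F^{*}) - e(F^{*}) + bc(F^{*})$ established just before the theorem statement. The combinatorial inputs are: $v(F) = v(G)$ since $F$ is spanning; $v(F^{*}) = c(\Sigma\setminus G)$, the number of faces of the cellulation $G$; $e(F^{*}) = e(G) - e(F)$ by the definition of $F^{*}$; and $bc(F^{*}) = bc(F)$, since the complement of the regular neighborhood $\mathcal{F}$ in $\Sigma$ deformation retracts onto $F^{*}$ and hence has the same boundary circles as $\mathcal{F}$. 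Substituting these, the $bc$ terms cancel and the difference $\chi(\mathcal{S}^{\perp}(F)) - \chi(\mathcal{S}(F))$ simplifies to $c(\Sigma\setminus G) - e(G) + 2e(F) - v(G)$. Since $G$ is a cellulation, $\chi(\Sigma) = v(G) - e(G) + c(\Sigma\setminus G)$, and subtracting this gives exactly $2e(F) - 2v(F)$, as required.

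The proof is really an arithmetic bookkeeping exercise, so there is no conceptual obstacle. The only step that requires a small geometric observation rather than pure substitution is the identity $bc(F^{*}) = bc(F)$, which uses the fact that $F$ and $F^{*}$ have regular neighborhoods sharing a common system of boundary circles in $\Sigma$. This is already implicit in the discussion of Poincar\'e duality of ribbon graphs given earlier in the section, so once it is recorded the argument collapses to the few lines outlined above.
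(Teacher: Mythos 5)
Your computation is correct as far as it goes, and it follows essentially the same route as the paper: both arguments reduce the theorem to the single monomial identity $n(F) = c(\Sigma\backslash F)-c(\Sigma) + \dis/2+s(F)/2-s^{\perp}(F)/2$ and verify it using the combinatorial formulas $\chi(\mathcal{S}(F)) = v(F)-e(F)+bc(F)$ and $\chi(\mathcal{S}^{\perp}(F)) = v(F^{*})-e(F^{*})+bc(F^{*})$, together with $e(F^{*}) = e(G)-e(F)$, $bc(F^{*}) = bc(F)$, and the identification of $v(F^{*})$ with the number of faces.

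There is, however, one genuine gap: you assume throughout that $G$ is a cellulation of $\Sigma$, whereas the theorem is stated for an arbitrary embedded graph. Several of your steps fail without cellularity: $F^{*}$ is only defined (in the paper's setup) as a subgraph of the Poincar\'e dual of a cellulation; the identification $v(F^{*}) = c(\Sigma\setminus G)$ requires the faces to be discs; and the Euler formula $\chi(\Sigma) = v(G)-e(G)+c(\Sigma\setminus G)$, which you invoke explicitly, is false for non-cellular embeddings (a single vertex on the torus gives $1-0+1 = 2 \neq 0 = \chi(\Sigma)$). The paper closes this gap with one extra device: extend $G$ to a cellulation $\tilde{G}$ of $\Sigma$ by adding edges. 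Since $\tilde{G}$ has the same vertex set as $G$, every spanning subgraph $F$ of $G$ is also a spanning subgraph of $\tilde{G}$, and the identity to be proved involves only $F$ and $\Sigma$, not $G$ itself; so it suffices to prove it for spanning subgraphs of cellulations, which is exactly what your computation accomplishes once $e(G)$ is replaced by $e(\tilde{G})$ (these terms cancel in your calculation in any case, and $F^{*}$ is then taken inside $(\tilde{G})^{*}$). Adding that reduction makes your argument complete.
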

 \begin{proof}
 We show that the equality holds on monomials. We have 
 \begin{equation}
 Y^{\dis/2}\pol_{G,\Sigma}(X,Y,Y,Y^{-1}) = \sum_{F \subseteq G}X^{c(F)-c(G)}Y^{c(\Sigma \backslash F)-c(\Sigma) + \dis/2+s(F)/2-s^{\perp}(F)/2}
 \end{equation}
 
 Thus we must show that 
 \begin{equation}\label{comb}
 n(F) = c(\Sigma \backslash F)-c(\Sigma) + \dis/2+s(F)/2-s^{\perp}(F)/2
 \end{equation}
 $G$ may not be a cellulation of $\Sigma$. However, we can extend $G$ to be a cellulation of $\Sigma$ by adding edges to $G$. Let $\tilde{G}$ be this extended graph. Then $\tilde{G}$ is a cellulation of $\Sigma$. $\tilde{G}$ has the same vertices as $G$, and thus any spanning subgraph of $G$ is also a spanning subgraph of $\tilde{G}$. $F$ is therefore a spanning subgraph of $\tilde{G}$. $\tilde{G}$ has a dual $(\tilde{G})^{*}$; we will let $F^{*}$ be the corresponding subgraph of the dual (as per our previous remarks). Then 
  \begin{align*}
c(\Sigma \backslash F)-c(\Sigma) + \dis/2+s(F)/2-s^{\perp}(F)/2  & = c(\Sigma \backslash F)-c(\Sigma)\\
&+ c(\Sigma) -\frac{1}{2}(v(\tilde{G})-e(\tilde{G})+bc(\tilde{G}))\\
&+ c(F) - \frac{1}{2}(v(F)-e(F)+bc(F))\\
&- c(\Sigma \backslash F) + \frac{1}{2}(v(F^{*})-e(F^{*})+bc(F^{*})) 
\end{align*}
 We note $v(\tilde{G}) = v(F) = v(G)$ and $e(F^{*}) = e(\tilde{G}) - e(F)$. We have $v(F^{*}) = bc(\tilde{G})$, and $bc(F) = bc(F^{*})$, since a regular neighborhood of $F^{*}$ is given by the complement of a regular neighborhood of $F$. Combining these assertions, it is easy to see that the expression on the right, after cancellations, reduces to $n(F)$. 
\end{proof}

The duality property of $\pol_{G, \Sigma}$ for orientable surfaces $\Sigma$ extends naturally to the general case. Let $G$ be a cellulation of a compact surface $\Sigma$, and $G^{*}$ the Poincar\'e dual graph to $G$ in $\Sigma$. Then we have 
 
 \begin{thm}\label{dual}
 \[
 \pol_{G,\Sigma}(X,Y,A,B) = \pol_{G^{*},\Sigma}(Y,X,B,A)
 \]
 \end{thm}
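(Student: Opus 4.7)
The plan is to exhibit a term-by-term matching between the spanning subgraph sum for $\pol_{G,\Sigma}$ and that for $\pol_{G^*,\Sigma}$ via the bijection $F \mapsto F^*$ (already introduced in the excerpt), where $F^*$ consists of those edges of $G^*$ dual to edges in $G \setminus F$. Since this bijection sends spanning subgraphs of $G$ to spanning subgraphs of $G^*$, it suffices to verify, for each spanning subgraph $F$, the four exponent identities
\[
c(\Sigma \setminus F^*) - c(\Sigma) = c(F) - c(G), \quad c(F^*) - c(G^*) = c(\Sigma \setminus F) - c(\Sigma),
\]
\[
s^{\perp}(F^*) = s(F), \quad s(F^*) = s^{\perp}(F).
\]

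For the connectivity exponents, the key observation is that $\Sigma \setminus F$ deformation retracts onto the dual cellular complex supported on $F^*$: each 2-cell of $G$ not cut by an edge of $F$ retracts onto its dual vertex, and the complementary edges retract across the uncut dual arcs. Hence $c(\Sigma \setminus F) = c(F^*)$, and by symmetry $c(\Sigma \setminus F^*) = c(F)$. Applied to $F = G$ (so $F^*$ consists only of the vertices of $G^*$), this gives $c(G^*) = c(\Sigma \setminus V(G)) = c(\Sigma)$ since removing isolated points from a surface does not disconnect it, and similarly $c(G) = c(\Sigma)$ because $G$ is a cellulation. Combining these yields the two connectivity identities. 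None of these arguments invoke orientability.

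For the genus exponents, the regular neighborhood of $F^*$ in $\Sigma$ is (up to isotopy) the closure of the complement of the regular neighborhood $\mathcal{F}$ of $F$, so the two neighborhoods share boundary components. Capping off these boundary circles by disks therefore produces the same closed surface in both cases, giving $\mathcal{S}(F^*) = \mathcal{S}^{\perp}(F)$ and $\mathcal{S}^{\perp}(F^*) = \mathcal{S}(F)$ as abstract (possibly non-orientable) surfaces, with identical Euler characteristics. Using the combinatorial definitions
\[
s(F^*) = 2c(F^*) - \chi(\mathcal{S}(F^*)) \quad \text{and} \quad s^{\perp}(F) = 2c(\Sigma \setminus F) - \chi(\mathcal{S}^{\perp}(F)),
\]
together with $c(F^*) = c(\Sigma \setminus F)$, gives $s(F^*) = s^{\perp}(F)$; the reverse identity follows symmetrically.

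The main obstacle is really just conceptual rather than computational: verifying that Krushkal's original orientable arguments, which he phrased via real homology and genus, survive once we replace these with the combinatorial definitions of $s$ and $s^{\perp}$. The check above shows they do, because the underlying topological identifications (the deformation retraction of $\Sigma \setminus F$ onto $F^*$, and the complementary relationship between regular neighborhoods) are local and entirely independent of orientability. Once the four exponent identities are in place, substituting $F^*$ for the summation variable in $\pol_{G^*,\Sigma}(Y,X,B,A)$ produces $\pol_{G,\Sigma}(X,Y,A,B)$ term by term, which completes the proof.
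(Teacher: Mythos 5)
Your proposal is correct and follows essentially the same route as the paper: a term-by-term matching of monomials under $F \mapsto F^{*}$, justified by the fact that $F$ and $F^{*}$ have complementary regular neighborhoods, so that $\mathcal{S}(F) \cong \mathcal{S}^{\perp}(F^{*})$, $\mathcal{S}^{\perp}(F) \cong \mathcal{S}(F^{*})$, and $c(\Sigma \setminus F) = c(F^{*})$. You supply more detail than the paper does (the deformation retraction and the normalizations $c(G) = c(G^{*}) = c(\Sigma)$); the only quibble is that the case you cite to get $c(G^{*}) = c(\Sigma)$ should be $F$ the edgeless spanning subgraph (so that $F^{*} = G^{*}$), not $F = G$.
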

 
 \begin{proof}
 We claim that the monomial of $\pol_{G, \Sigma}$ corresponding to $F$ is equal to the monomial of $\pol_{G^{*}, \Sigma}$ corresponding to $F^{*}$. It is clear that $\mathcal{S}(F) \cong \mathcal{S}^{\perp}(F^{*})$,  $\mathcal{S}^{\perp}(F) \cong \mathcal{S}(F^{*})$, and $c(\Sigma \backslash F) = c(F^{*})$, since $F$ and $F^{*}$ have complementary regular neighborhoods. Hence $c(F) = c(\Sigma \backslash F^{*})$, $c(\Sigma \backslash F) = c(F^{*})$, $s(F) = s^{\perp}(F^{*})$, and $s^{\perp}(F) = s(F^{*})$. Lastly $c(\Sigma) = c(G^{*})$, since $G$ and $G^{*}$ are both cellulations of $\Sigma$. 
 \end{proof}
 
 When $G$ is a cellulation, we can think of $G$ as a ribbon graph and spanning subgraphs of $G$ as spanning subgraphs of this ribbon graph. In light of Theorem \ref{dual}, we can think of the generalized Krushkal polynomial as a ribbon graph invariant given by   
 \begin{equation}\label{ribbon}
 \pol_{G}(X,Y,A,B) = \sum_{F \subseteq G} X^{c(F)-c(G)} Y^{c(F^{*}) - c(G^{*})} A^{s(F)}B^{s(F^{*})}
 \end{equation}
 where we now think of $s$ as a ribbon graph invariant given by $s(F) = 2c(F) - v(F) + e(F) - bc(F)$. In the remainder of the paper, we will be concerned exclusively with ribbon graphs and evaluations of the Krushkal polynomial on ribbon graphs, for which we will use the form of equation \eqref{ribbon}.
 
 We conclude this section with a few remarks on the topological significance of $P_{G,\Sigma}$ in relation to homology with coefficients in $\mathbb{Z} / 2\mathbb{Z}$. As usual, let $\Sigma$ be a compact surface, let $G$ be a graph embedded in $\Sigma$ by a map $i$, and let $F$ be a spanning subgraph of $G$. The reader may check, by following the techniques in \cite{Kru} with coefficients in $\mathbb{Z} / 2\mathbb{Z}$ instead, that
 \begin{equation}
 c(\Sigma \backslash F) - c(\Sigma) = \text{dim} \; \text{ker}(i_{*}: H_{1}(F;\mathbb{Z} / 2\mathbb{Z}) \longrightarrow H_{1}(\Sigma;\mathbb{Z} / 2\mathbb{Z}))
 \end{equation}
 and that this equality holds regardless of the orientability of $\Sigma$. It is straightforward to compute as well that 
 \begin{align}
 &s(F) = \text{dim} \; H_{1}(\mathcal{S}(F); \mathbb{Z} / 2\mathbb{Z}) \\
 &s^{\perp}(F) = \text{dim} \; H_{1}(\mathcal{S}^{\perp}(F); \mathbb{Z} / 2\mathbb{Z})
 \end{align}
 
 

 
 \section{Quasi-trees}

 We transition now to exclusively using the language of ribbon graphs. The exposition in this section mostly follows that of \cite{VT2}, using the partial duality of S. Chmutov \cite{Ch} to define activities of edges with respect to a given quasi-tree. A binary tree is then constructed  

\subsection{Partial Duality}

 Partial duality was introduced under the name \emph{generalized duality} by S. Chmutov in \cite{Ch} and given the more appropriate name of \emph{partial duality} in \cite{Mo2}. Partial duality generalizes the notion of duality for ribbon graphs. There are several equivalent procedures for obtaining the partial dual of a ribbon graph with respect to a subset of edges; the procedure we follow is that of \cite{BBC}. For alternatives which use a more combinatorial representation of ribbon graphs, see \cite{Ch} and \cite{EMM}.
 
 For a ribbon graph $G$, let $E(G)$ be the set of edges of $G$, and let $H \subseteq E(G)$. We define the \emph{partial dual} of $G$ with respect to $H$ to be the ribbon graph $G^{H}$ obtained from the following procedure: Consider the spanning subgraph $F_{H}$ of $G$ consisting of all the vertices of $G$ and the edges of $H$. The inclusion $F_{H} \longrightarrow G$ embeds the boundary curves of $F_{H}$ into $G$. Glue a disc to each of these closed curves in $G$, and remove the interiors of the vertices of $G$. The resulting surface with boundary $G^{H}$ is a ribbon graph whose vertices are the disks which were glued in, and whose edges either run parallel to the corresponding edges of $G$ (for those edges not in $H$) or cut transversely the corresponding edges of $G$ (for those edges in $H$). 
 
 We list some properties of partial duality.
 
 {\bf Properties} \cite{Ch}
  \begin{enumerate}
  \item
  $G^{\emptyset} = G$
  \item
  $G^{E(G)} = G^{*}$ 
  \item
  $(G^{H})^{H'} = G^{(H \cup H') \backslash (H \cap H)}$
  \item
  Partial duality preserves orientability.
  \item
  $c(G^H) = c(G)$
  \item
  $v(G^H) = bc(F_{H})$
  \item
  $bc(G^H) = bc(F_{E(G) \backslash H})$
  \end{enumerate}
  
  We also record the following property, which we will need later. 
  
  \begin{prop}\label{parconn}
  Let $A,B \subseteq E(G)$ with $A \cap B = \emptyset$. Then $c(G \backslash B) = c(G^{A} \backslash B)$.
  \end{prop}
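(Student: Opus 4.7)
The plan is to reduce the statement to property (5), $c(G^H)=c(G)$, by showing that partial duality with respect to $A$ commutes with deletion of $B$ whenever $A\cap B=\emptyset$, i.e.\ that $(G\setminus B)^{A}=G^{A}\setminus B$ as ribbon graphs. Once that commutation is established, property (5) applied to the ribbon graph $G\setminus B$ (which contains $A$ as a subset of its edge set, since $A\cap B=\emptyset$) gives
\[
c(G^{A}\setminus B)=c\bigl((G\setminus B)^{A}\bigr)=c(G\setminus B),
\]
which is exactly the claim.

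To establish the commutation, I would go directly to the construction of the partial dual recalled in this section. Forming $G^{A}$ means considering the spanning subgraph $F_{A}$, gluing a disc to each boundary curve of a regular neighbourhood of $F_{A}$, and deleting the interiors of the original vertices; the edges not in $A$ are carried through unchanged as $1$-handles running parallel to the originals, while the edges in $A$ become transverse $1$-handles attached to the new discs. The key observation is that the edges of $B$, being disjoint from $A$, play no role in this construction: they are $1$-handles attached to the vertices of $G$ that are simply carried along. Consequently one can either delete them first (inside $G$) and then perform the partial dual using $F_{A}\subseteq G\setminus B$, or perform the partial dual first (inside $G$) and then delete the corresponding parallel $1$-handles from $G^{A}$. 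In both cases $F_{A}$ has the same regular neighbourhood and the same boundary curves, the same discs are glued in, the same original vertices are removed, and the surviving edges are the same set $A\cup(E(G)\setminus(A\cup B))$, with the edges of $A$ in transverse position.

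The main point of care is justifying that the boundary curves of the regular neighbourhood of $F_{A}$ are genuinely unaffected by the presence or absence of the $B$-edges. This follows because $F_{A}$ and the $B$-edges are attached to the same vertices but occupy disjoint collars around each vertex boundary, so one may take the regular neighbourhood of $F_{A}$ small enough to miss the $B$-handles entirely. After this verification, the argument is one line: apply property (5) inside $G\setminus B$.
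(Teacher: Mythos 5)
Your proof is correct and follows essentially the same route as the paper: the paper likewise observes that, since $A\cap B=\emptyset$, one may obtain $G^{A}\setminus B$ by first deleting $B$ and then performing the partial-dual construction on the spanning subgraph with edge set $A$, and that the disc-gluing and vertex-removal steps do not change the number of connected components. Your explicit appeal to property (5) for $G\setminus B$ is just a packaged form of that last observation.
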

  \begin{proof} 
  This is immediate from our formulation of partial duality. Since $A$ and $B$ are disjoint, removing the edges of $B$ does not affect the spanning subgraph with edge set $A$. Thus we may obtain the ribbon graph $G^{A} \backslash B$ by first deleting the edges in $B$, then gluing in disks along the boundary components of the spanning subgraph corresponding to $A$, and then removing the interiors of the vertices of $G$. Since the last two of these operations preserve connected components, we conclude that $c(G \backslash B) = c(G^{A} \backslash B)$.
  \end{proof}
 
 \begin{defn}
 A quasi-tree $Q$ is a ribbon graph with $bc(Q) = 1$. For $G$ a ribbon graph, denote the set of spanning subgraphs of $G$ which are quasi-trees by $\mathcal{Q}_{G}$.
 \end{defn}
 It's clear that any spanning tree of a ribbon graph must be a quasi-tree, but not every quasi-tree is a spanning tree. 
 If $Q$ is a quasi-tree of $G$, then $G^{E(Q)}$ is a one-vertex ribbon graph. For two edges $e$, $e'$ in $E(G)$ we say that $e$ \emph{links} $e'$ if, traversing the boundary of the vertex of $G^{E(Q)}$, the boundaries of the edge ribbons $e$ and $e'$ are met alternately. It's trivial to see that $e$ links $e'$ if and only if $e'$ links $e$. 

\begin{defn}
Let $G$ be a ribbon graph and $Q \in \mathcal{Q}_{G}$ be one of its quasi-trees. Let $\prec$ be a total order on the edges $E(G)$ of $G$. $e \in E(G)$ is \emph{live} if $e$ does not link any lower-ordered edge; otherwise it is \emph{dead}. $e$ is \emph{internal} if $e \in E(Q)$ and \emph{external} otherwise.
\end{defn}

	This notion of activities is due to F. Vignes-Tourneret \cite{VT2} and is equivalent to those given in \cite{CKS1} (in the orientable case) and \cite{De}. We will further distinguish edges by orientability: for a quasi-tree $Q$ we say that an edge is orientable (resp. nonorientable) if the edge forms an orientable (resp. nonorientable) loop in $G^{E(Q)}$.
	\begin{exa}\label{graph}We end this section by computing the activities of an example. Consider the ribbon graph below, with the given ordering on the edges,
	
	\raisebox{0pt}[80pt][30pt]{\begin{picture}(80,15)(-60,20)
        \put(0,0){\includegraphics[scale = .3]{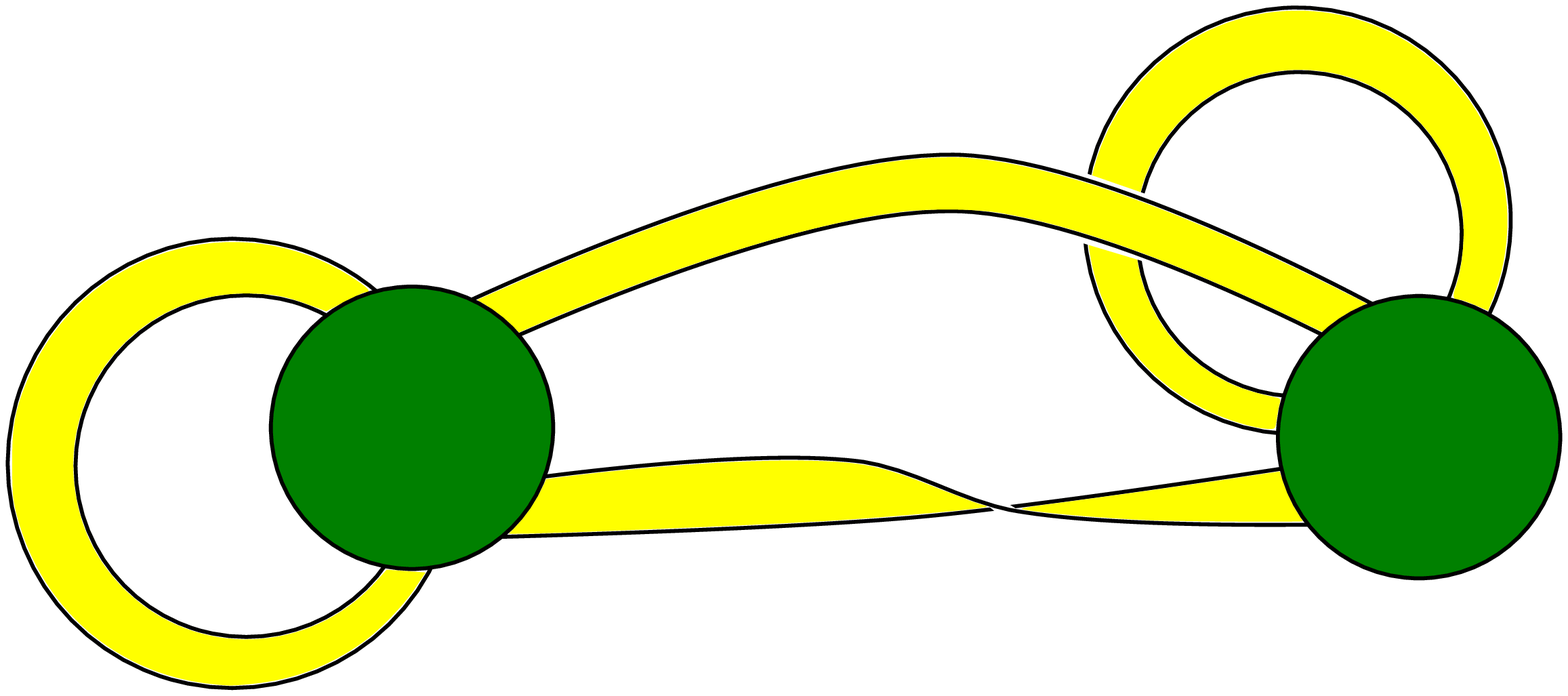}} 
        \put(-10,30){$1$}
        \put(110,70){$2$}
        \put(110,5){$3$}
        \put(190,80){$4$}
     \end{picture}}
     
     This ribbon graph has a quasi-tree $\left\{2,3,4\right\}$. Taking the partial dual with respect to these edges gives the following ribbon graph,
     
	\raisebox{0pt}[80pt][70pt]{\begin{picture}(80,15)(-80,50)
        \put(0,0){\includegraphics[scale = .3]{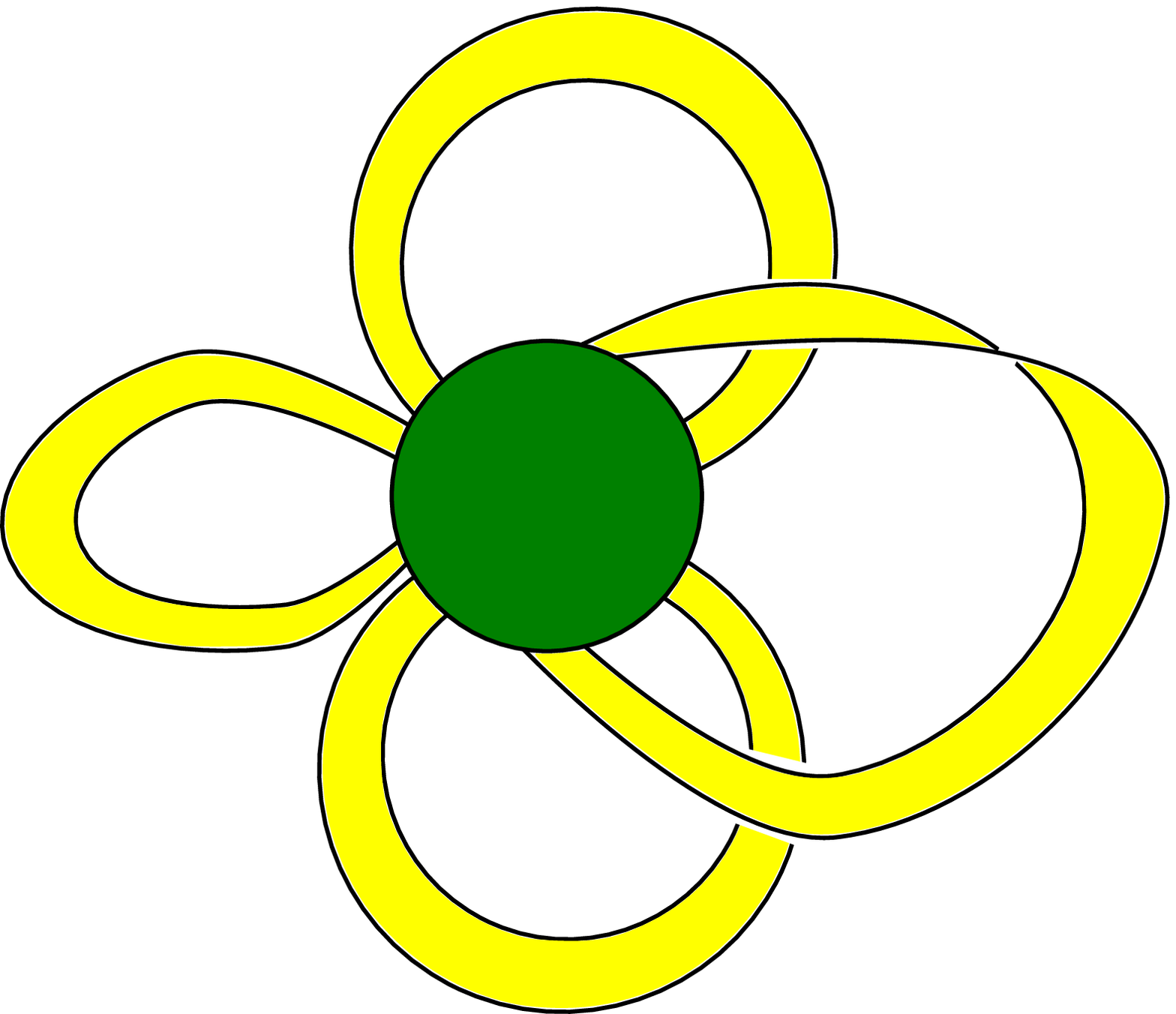}} 
        \put(-10,60){$1$}
        \put(60,120){$2$}
        \put(60,-10){$3$}
        \put(140,60){$4$}
     \end{picture}}
     
     We see that $1$ is externally live, $2,3$ are internally live, and $4$ is dead. Note that $4$ is also a non-orientable edge with respect to this quasi-tree. 
     
     \end{exa}
	\subsection{Binary Tree of Partial Resolutions}
	\begin{defn}
	A \emph{partial resolution} of $G$ is a function $\rho : E(G) \longrightarrow \left\{0,1,*\right\}$. A \emph{resolution} is a partial resolution which only takes values in $\left\{0,1\right\}$.
	\end{defn}    
 There is a natural bijection between resolutions of $G$ and spanning subgraphs of $G$. Namely, to a resolution $\gamma$ we associate the spanning subgraph of $G$ whose edges are those $e \in E(G)$ for which $\gamma(e) = 1$. For a partial resolution $\rho$ and a resolution $\gamma$, we say that $\rho$ \emph{contains} $\gamma$ if $\rho^{-1}(i) \subseteq \gamma^{-1}(i)$, for $i = 0,1$. If we think of edges $e$ for which $\rho(e) = *$ as "`unresolved edges"', then $\rho$ contains $\gamma$ if and only if the unresolved edges of $\rho$ can be resolved to achieve the resolution $\gamma$. Resolutions which correspond to quasi-trees will be called \emph{quasi-tree resolutions}.
 
 Let $\rho$ be a partial resolution, and let $e$ be an edge for which $\rho(e) = *$. Let $\rho_{0}^{e}$ and $\rho_{1}^{e}$ be the partial resolutions in which $e$ is resolved to have value 0 and 1, respectively (all other edges remain unchanged). We say that $e$ is \emph{nugatory} (with respect to $\rho$) if at least one of the partial resolutions $\rho_{0}^{e}$ and $\rho_{1}^{e}$ do not contain a quasi-tree resolution.
 
 For a total order $\prec$ on $E(G)$ we construct a binary tree $\mathcal{T}(G)$ whose nodes are partial resolutions of $G$ by the following inductive procedure: The root of $\mathcal{T}(G)$ is the partial resolution assigning the value * to all edges. Beginning with the highest order edge, we resolve each edge $e$ to be either $0$ or $1$, except when $e$ is nugatory, in which case we leave $e$ unresolved and move to the next highest order edge. We end on a leaf when all subsequent edges are nugatory, then choose the lowest order non-nugatory edge on the tree which has yet to be resolved, and repeat this process, concluding when all unresolved edges are nugatory. The result is a binary tree for which each leaf $\rho$ corresponds to a quasi-tree $Q_{\rho}$ of $G$, as all unresolved edges of $\rho$ are nugatory and hence may be resolved uniquely to a quasi-tree. 
 
 The following theorem describes the structure of the leaf $\rho$ in terms of the activities of $Q_{\rho}$. It forms the mathematical backbone for the quasi-tree expansions of ribbon graph polynomial invariants. In the orientable case it was proven in \cite{CKS1}; the extension to the non-orientable case was done concurrently in \cite{VT2} and \cite{De}.
 
 \begin{thm}\label{live}
 Let $\rho$ be a leaf of $\mathcal{T}(G)$. Then $\rho(e) = *$ if and only if $e$ is live and orientable in $Q_{\rho}$.
 \end{thm}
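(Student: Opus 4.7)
The plan is to prove Theorem \ref{live} by translating to the partial dual and running an induction along the path from root to leaf $\rho$ in $\mathcal{T}(G)$, processing edges from highest to lowest order. First I would reformulate via partial duality: set $H := G^{E(Q_\rho)}$, which by property $(6)$ is a one-vertex ribbon graph whose edges correspond to $E(G)$ as loops at the single vertex. Orientability of $e$ with respect to $Q_\rho$ is precisely orientability of the loop $e$ in $H$, and linking of $e$ and $e'$ corresponds to interleaving of their half-edges in the cyclic order around the vertex of $H$. By property $(3)$, $G^{E(Q_\rho \triangle S)} = H^S$, so that ``$Q_\rho \triangle S$ is a quasi-tree of $G$'' is equivalent to ``$H^S$ has one vertex,'' reducing the entire analysis to the combinatorics of the one-vertex graph $H$.

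The core is the inductive claim: when the tree construction examines edge $e$ with current partial resolution $\sigma$, $e$ is nugatory at $\sigma$ if and only if $e$ is live and orientable with respect to $Q_\rho$. For the ``if'' direction, flipping a single orientable loop $e$ in $H$ produces $v(H^{\{e\}}) = bc(F_{\{e\}}) = 2$ since the spanning subgraph on $\{e\}$ is an annulus; because $e$ is live no lower-ordered edge links it, so no joint toggle with an unresolved edge can restore the quasi-tree property, and since every higher-ordered edge is already fixed in $\sigma$, the value of $e$ in any compatible quasi-tree is forced to match $Q_\rho$, making $e$ nugatory. For the ``only if'' direction, if $e$ is nonorientable in $Q_\rho$ then $Q_\rho \triangle \{e\}$ is already a quasi-tree (the spanning subgraph on $\{e\}$ in $H$ is a Möbius band with $bc = 1$), and if $e$ is dead with a lower-ordered linking partner $f$, then $f$ is still unresolved when $e$ is processed and $Q_\rho \triangle \{e,f\}$ is a quasi-tree (two interleaving loops at a single vertex form a surface with $bc = 1$). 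In either case both values of $\sigma(e)$ admit a quasi-tree extension consistent with $\sigma$, so $e$ is not nugatory.

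The main obstacle will be the boundary-component bookkeeping: verifying the atomic flip computations $v(H^{\{e\}})$ and $v(H^{\{e,f\}})$ across all orientability combinations, and checking that the alternative quasi-trees constructed in the ``only if'' direction really do agree with $\sigma$ on already-resolved edges. The latter reduces to the observation that my toggles involve only $e$ and possibly a lower-ordered partner $f$, both unresolved at the moment $e$ is processed, so no higher-ordered edge value is disturbed; consistency then follows from the inductive hypothesis and Proposition \ref{parconn}. The subtlest atomic case is when $e$ is dead with a nonorientable linking partner, where I must confirm $v(H^{\{e,f\}}) = 1$ by direct Euler-characteristic analysis of the two-edge one-vertex ribbon graph they form, using $\chi = v - e$ together with the classification of compact surfaces with boundary.
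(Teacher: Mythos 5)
First, a point of comparison: the paper does not actually prove Theorem \ref{live} --- it is quoted from \cite{CKS1}, \cite{VT2} and \cite{De} --- so there is no internal proof to measure you against. Your overall strategy (pass to the one-vertex partial dual $H=G^{E(Q_\rho)}$, identify ``$Q_\rho\triangle S$ is a quasi-tree'' with $v(H^S)=1$ via properties (3) and (6), and show that an edge is nugatory at the moment it is examined exactly when it is live and orientable) is the standard route taken in those references and is sound in outline. However, two of your atomic claims are wrong or incomplete as stated.

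The parenthetical ``two interleaving loops at a single vertex form a surface with $bc=1$'' is false when both loops are nonorientable: tracing the boundary of the one-vertex ribbon graph on two interleaved twisted bands gives $bc=2$ (the surface is $\mathbb{RP}^2$ minus two disks, $\chi=-1$), so $Q_\rho\triangle\{e,f\}$ is \emph{not} a quasi-tree there. Your argument survives only because a dead nonorientable $e$ is already covered by the single toggle $Q_\rho\triangle\{e\}$; you must restructure the case split as ``$e$ nonorientable: toggle $e$ alone'' and ``$e$ orientable and dead: toggle $\{e,f\}$,'' and only in the latter case is the computation $bc(F_{\{e,f\}})=1$ correct (it then holds whether $f$ is twisted or not). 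Second, in the ``if'' direction you verify only $v(H^{\{e\}})=2$ and then assert that no ``joint toggle'' can help; what must be excluded is \emph{every} set $S$ with $e\in S$ and $S\setminus\{e\}$ consisting of edges not linking $e$. The lemma you need is: if $e$ is an untwisted loop linked with no other edge of $S$, then the vertex disk together with $e$ is an annulus whose two boundary circles receive the remaining bands of $S$ separately, so no boundary component of $F_S$ meets both and $bc(F_S)\ge 2$ for all such $S$. Relatedly, ``every higher-ordered edge is already fixed in $\sigma$'' is literally false, since higher-ordered nugatory edges remain unresolved; you need the observation that a nugatory edge's value is forced in every quasi-tree resolution compatible with any descendant node, so that $S$ contains no higher-ordered edges --- this matters because a live edge may well link a \emph{higher}-ordered edge, which would break the annulus argument if such an edge could lie in $S$. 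With these repairs the proof goes through.
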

 
 Informally, unresolved edges in a leaf are live and orientable, while resolved edges are either dead or live and nonorientable. 

 At this point it is prudent to fix some notation. Given a total order $\prec$ on $E(G)$ and a quasi-tree $Q \in \mathcal{Q}_{G}$, we consider
 \begin{enumerate} 
 \item
 $\mathcal{DI}(Q)$, the set of internally dead edges.
 \item
 $\mathcal{I}_{o}(Q)$, the set of internally live orientable edges.
 \item
  $\mathcal{I}_{n}(Q)$, the set of internally live nonorientable edges.
  \item $\mathcal{DE}(Q)$, the set of externally dead edges.
 \item $\mathcal{E}_{o}(Q)$, the set of externally live orientable edges. 
 \item $\mathcal{E}_{n}(Q)$, the set of externally live nonorientable edges.
 
 \end{enumerate}  
  Note that each of these categories is mutually exclusive; a quasi-tree gives rise to a six-fold partition of the set $E(G)$ determined by $\prec$. We can rephrase Theorem \ref{live} in the following way, by observing that each resolution of $G$ belongs to a unique leaf of $\mathcal{T}(G)$.
 
 \begin{thm}\label{activ}
 Let G be a connected ribbon graph and $\mathcal{S}_{G}$ its set of spanning subgraphs. Given a total order on E(G), $\mathcal{S}_{G}$ is in one-to-one correspondence with $\sqcup_{Q \in \mathcal{Q}_{G}} \mathcal{I}_{o}(Q) \times \mathcal{E}_{o}(Q)$. Namely to each spanning subribbon graph $F$ there corresponds a unique quasi-tree $Q_{F}$, for which there exists $S \subseteq \mathcal{I}_{o}(Q_{F}) \cup \mathcal{E}_{o}(Q_{F})$ such that $E(F) = \mathcal{DI}(Q_{F}) \cup \mathcal{I}_{n}(Q_{F}) \cup S$. 
 \end{thm}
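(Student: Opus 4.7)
The plan is to extract the bijection directly from the binary tree $\mathcal{T}(G)$, using Theorem \ref{live} to pin down which edges are unresolved in each leaf.

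First I would observe that the leaves of $\mathcal{T}(G)$ partition the set of all resolutions of $G$. Given any spanning subgraph $F$ with resolution $\gamma_F$, trace a root-to-leaf path in $\mathcal{T}(G)$ by always descending to the child whose branching edge $e$ has been assigned the value $\gamma_F(e)$. Because $\mathcal{T}(G)$ branches only on non-nugatory edges, both such children are present at every internal node, so this descent is well-defined and terminates at a unique leaf $\rho_F$; set $Q_F := Q_{\rho_F}$.

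Next I would show that for each leaf $\rho$ the values on its resolved edges record membership in $Q_\rho$: that is, $\rho(e) = 1$ iff $e \in E(Q_\rho)$ for every resolved edge $e$. This follows by a short induction along the root-to-leaf path, since the value $\rho(e)$ is fixed once and for all when $e$ is branched on, while $Q_\rho$ extends $\rho$ in the unique quasi-tree way by filling in the nugatory forced values. Combining this with Theorem \ref{live}, which identifies the unresolved edges of $\rho$ as precisely $\mathcal{I}_o(Q_\rho) \cup \mathcal{E}_o(Q_\rho)$, I conclude that $\rho^{-1}(1) = \mathcal{DI}(Q_\rho) \cup \mathcal{I}_n(Q_\rho)$ and $\rho^{-1}(0) = \mathcal{DE}(Q_\rho) \cup \mathcal{E}_n(Q_\rho)$.

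Since $\gamma_F$ agrees with $\rho_F$ on all of its resolved edges, $F$ must contain $\mathcal{DI}(Q_F) \cup \mathcal{I}_n(Q_F)$ and be disjoint from $\mathcal{DE}(Q_F) \cup \mathcal{E}_n(Q_F)$; the only remaining freedom is the subset $S := E(F) \cap (\mathcal{I}_o(Q_F) \cup \mathcal{E}_o(Q_F))$, which yields the claimed decomposition $E(F) = \mathcal{DI}(Q_F) \cup \mathcal{I}_n(Q_F) \cup S$. Conversely, any pair $(Q, S)$ with $S \subseteq \mathcal{I}_o(Q) \cup \mathcal{E}_o(Q)$ defines a spanning subgraph by this same formula whose associated leaf is the one for $Q$, providing a two-sided inverse.

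The main obstacle will be the second step: verifying that a resolved coordinate of a leaf encodes exactly quasi-tree membership. The key observation is that $\mathcal{T}(G)$ branches only on non-nugatory edges, so both resolutions of such an edge remain compatible with a quasi-tree completion, and the value chosen simply selects which subtree (and hence which quasi-tree $Q_\rho$) one ends up in. Once this identification is secured, the remainder of the argument is formal bookkeeping powered by Theorem \ref{live}.
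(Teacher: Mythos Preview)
Your proposal is correct and follows essentially the same route as the paper. The paper itself offers no formal proof: it simply remarks that ``each resolution of $G$ belongs to a unique leaf of $\mathcal{T}(G)$'' and declares the theorem a rephrasing of Theorem~\ref{live}; you have supplied exactly the details behind that remark (the root-to-leaf trace and the identification of resolved coordinates with quasi-tree membership), so nothing further is needed.
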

 
 This theorem implies that a total order $\prec$ determines a unique grouping of the spanning subgraphs of $G$ according to the quasi-trees of $G$. 
 
 We end this section by collecting two lemmas from \cite{VT2} that we will need. For a subset $H \subseteq E(G)$, let $F_{H}$ be the spanning subgraph of $G$ whose edge set is $H$. Let $S = S_{1} \sqcup S_{2}$, where $S_{1} \subseteq \mathcal{I}_{o}(Q)$ and $S_{2} \subseteq \mathcal{E}_{o}(Q)$. Lastly, we adopt the following notation: $\mathcal{VI}(Q) := \mathcal{DI}(Q) \cup \mathcal{I}_{n}(Q)$ and $\mathcal{VE}(Q):= \mathcal{DE}(Q) \cup \mathcal{E}_{n}(Q)$. These two lemmas are grounded in the following observation: if $e$ and $e'$ are both live and orientable with respect to a quasi-tree $Q$, then they cannot link each other. Thus in the ribbon graph $G^{E(Q)}$, $e$ and $e'$ are orientable loops which do not link one another. 
 
 \begin{lemma}\label{conn}
 \[
 c(F_{\mathcal{VI}(Q) \cup S}) =  c(F_{\mathcal{VI}(Q) \cup S_{1}})
 \]
 \end{lemma}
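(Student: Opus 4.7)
The plan is to reduce the claim to showing that each $e \in S_2$ is a loop in the partial dual $G^{H_1}$, where $H_1 := \mathcal{VI}(Q) \cup S_1$, and then exploit the non-linking observation stated just before the lemma: live orientable edges with respect to $Q$ cannot link each other in $G^{E(Q)}$.

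Let $B_1 := E(G) \setminus H_1$ and $B_2 := E(G) \setminus (H_1 \cup S_2)$, so that $F_{H_1} = G \setminus B_1$ and $F_{\mathcal{VI}(Q) \cup S} = G \setminus B_2$. Because $H_1$ is contained in $H_1 \cup S_2$, the intersection $H_1 \cap B_i$ is empty for $i = 1,2$, and Proposition~\ref{parconn} yields
\[
c(F_{H_1}) = c(G^{H_1} \setminus B_1), \qquad c(F_{\mathcal{VI}(Q) \cup S}) = c(G^{H_1} \setminus B_2).
\]
Since $B_1 \setminus B_2 = S_2$, the ribbon graph $G^{H_1} \setminus B_1$ is obtained from $G^{H_1} \setminus B_2$ by deleting the edges in $S_2$. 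The lemma is therefore equivalent to showing that every $e \in S_2$ is a loop in $G^{H_1}$.

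To establish this, I set $K := \mathcal{I}_o(Q) \setminus S_1$. Then $H_1$ and $K$ are disjoint with $H_1 \cup K = E(Q)$, so property (3) of partial duality gives $(G^{H_1})^K = G^{E(Q)}$. Since $bc(Q) = 1$, the ribbon graph $G^{E(Q)}$ has a single vertex, so every $e \in S_2$ is automatically a loop there. Because both $K \subseteq \mathcal{I}_o(Q)$ and $S_2 \subseteq \mathcal{E}_o(Q)$ consist of live orientable edges with respect to $Q$, the non-linking observation forces $e$ to be an orientable loop in $G^{E(Q)}$ that does not link any edge of $K$. The vertices of $G^{H_1} = (G^{E(Q)})^K$ are in bijection with the boundary components of the spanning sub-ribbon-graph $F_K$ of $G^{E(Q)}$, and the non-linking orientability of $e$ relative to each edge of $K$ forces the two feet of $e$ on the vertex boundary of $G^{E(Q)}$ to lie on the same boundary component of $F_K$; consequently $e$ is a loop in $G^{H_1}$, as desired.

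The main obstacle is the geometric assertion in this last step: that in a one-vertex ribbon graph, a non-linking orientable loop keeps both of its feet on the same boundary component of any sub-ribbon-graph composed of pairwise non-linking orientable loops. I would verify this by induction on $|K|$, using that adjoining a single such loop to $F_K$ splits one boundary component into two, and that non-linking with this new loop guarantees both feet of $e$ end up on the same one of the two new boundary components.
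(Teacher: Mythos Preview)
Your proof is correct and follows essentially the same route as the paper: both apply Proposition~\ref{parconn} with $A=\mathcal{VI}(Q)\cup S_1$ to pass to the partial dual $G^{H_1}$, and then argue that every edge of $S_2\subseteq\mathcal{E}_o(Q)$ is a loop there because live orientable edges do not link one another in $G^{E(Q)}$. The paper simply asserts this last fact, whereas you spell out the underlying geometric reason (via the boundary components of $F_K$ in the one-vertex graph $G^{E(Q)}$) and sketch an induction; this extra detail is a genuine clarification rather than a different argument.
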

 
 \begin{proof}
 We think of these spanning subgraphs as being obtained from $G$ by deleting edges. Thus $F_{\mathcal{VI}(Q) \cup S} = G \backslash (\mathcal{VI}(Q) \cup S)^{c}$. By Proposition \ref{parconn}, 
 \begin{equation}
 c(G \backslash (\mathcal{VI}(Q) \cup S)^{c}) = c(G^{\mathcal{VI}(Q) \cup S_{1}} \backslash (\mathcal{VI}(Q) \cup S)^{c})
 \end{equation}
 $G^{\mathcal{VI}(Q) \cup S_{1}}$ is obtained from $G^{E(Q)}$ by taking the partial dual with respect to a subset of the internally live, orientable edges. Since no live edges link each other in $G^{E(Q)}$, every externally live orientable edge will remain a loop in $G^{\mathcal{VI}(Q) \cup S_{1}}$. Thus deleting edges in $\mathcal{E}_{o}(Q)$ from this graph does not affect the number of connected components, regardless of whether any other edges have been removed. Thus,
 \begin{align*}
 &c(G^{\mathcal{VI}(Q) \cup S_{1}} \backslash (\mathcal{VI}(Q) \cup S)^{c}) = c(G^{\mathcal{VI}(Q) \cup S_{1}} \backslash ((\mathcal{VI}(Q) \cup S)^{c} \cup S_{2})) \\
  &= c(G^{\mathcal{VI}(Q) \cup S_{1}} \backslash (\mathcal{VI}(Q) \cup S_{1})^{c}) = c(G \backslash (\mathcal{VI}(Q) \cup S_{1})^{c})
 \end{align*}
 \end{proof}

 \begin{lemma}\label{bc}
 \[
 bc(F_{\mathcal{VI}(Q) \cup S}) =  bc(F_{\mathcal{VI}(Q)}) - \left| S_{1} \right| + \left| S_{2} \right|
 \]
 \end{lemma}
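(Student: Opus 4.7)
The plan is to recast both sides as vertex counts of appropriate partial duals, in the spirit of the proof of Lemma \ref{conn}. By property 6 of partial duality, $bc(F_H) = v(G^H)$ for any $H \subseteq E(G)$. Applying property 3 with base $E(Q)$, I would first establish the identity
\[
G^{\mathcal{VI}(Q) \cup S} = (G^{E(Q)})^{T}, \qquad T := (\mathcal{I}_o(Q) \setminus S_1) \cup S_2,
\]
after a short set-theoretic check that $E(Q) \bigtriangleup (\mathcal{VI}(Q) \cup S) = T$, using $\mathcal{VI}(Q), S_1 \subseteq E(Q)$ and $S_2 \cap E(Q) = \emptyset$.

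Next I would apply property 6 again, this time inside the ribbon graph $G^{E(Q)}$, to rewrite $v((G^{E(Q)})^T)$ as the number of boundary components of the spanning subgraph of $G^{E(Q)}$ with edge set $T$. Since $Q$ is a quasi-tree, $G^{E(Q)}$ has exactly one vertex, so this spanning subgraph is a single disk with $|T|$ band-loops attached. By the observation stated just before Lemma \ref{conn}, every edge in $T \subseteq \mathcal{I}_o(Q) \cup \mathcal{E}_o(Q)$ is a live orientable loop in $G^{E(Q)}$, and no two live orientable edges link each other; hence the $|T|$ loops are pairwise non-linking and all orientable.

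The combinatorial heart of the proof is then the claim that a disk with $k$ pairwise non-linking orientable loops attached has exactly $k+1$ boundary components. I would verify this by induction on $k$: the non-linking hypothesis makes the attachment arcs form a non-crossing chord diagram on $\partial D$, so there always exists an innermost loop whose two attachment arcs are adjacent on the current boundary. Removing it leaves a non-linking collection of $k-1$ orientable loops, and reattaching the innermost loop orientably splits a single boundary component into two, adding $1$ to $bc$. Combined with the previous reductions this gives $bc(F_{\mathcal{VI}(Q) \cup S}) = 1 + |T| = 1 + |\mathcal{I}_o(Q)| - |S_1| + |S_2|$; specializing to $S_1 = S_2 = \emptyset$ yields $bc(F_{\mathcal{VI}(Q)}) = 1 + |\mathcal{I}_o(Q)|$, and subtracting produces the desired identity.

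The main obstacle is the combinatorial claim about non-linking orientable loops. Intuitively it is clear, but it requires the innermost-chord argument together with the explicit use of orientability, since a non-orientable innermost loop would leave the boundary count unchanged rather than increase it by one. Everything else is a formal manipulation of the listed properties of partial duality.
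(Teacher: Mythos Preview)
Your proposal is correct and follows essentially the same route as the paper's own proof: convert $bc$ to a vertex count via property~6, pass to $G^{E(Q)}$ via property~3 to land on the one-vertex spanning subgraph with edge set $T=(\mathcal{I}_o(Q)\setminus S_1)\cup S_2$, observe that these edges are orientable non-linking loops, and then subtract the $S=\emptyset$ case. Your explicit $+1$ in $bc = |T|+1$ (justified by the innermost-chord induction) is in fact more careful than the paper's printed computation, which writes $bc(H)=|\mathcal{I}_o(Q)|-|S_1|+|S_2|$ without the $+1$; the discrepancy is harmless since it cancels upon taking the difference.
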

 
 \begin{proof}
 We have 
 \begin{equation}
 bc(F_{\mathcal{VI}(Q) \cup S}) = v(G^{\mathcal{VI}(Q) \cup S}) = v((G^{E(Q)})^{\mathcal{I}_{o}(Q)\backslash S_{1} \cup S_{2}}) = bc(H)
 \end{equation}
  where $H$ is the spanning subgraph of $G^{E(Q)}$ whose edge set is $\mathcal{I}_{o}(Q)\backslash (S_{1} \cup S_{2}$). Since all of these edges are live and orientable, $H$ is a single vertex graph whose edges are orientable loops, none of which are linked. Thus 
  \begin{equation}
  bc(F_{\mathcal{VI}(Q) \cup S}) = bc(H) = \left| \mathcal{I}_{o}(Q) \right| - \left|S_{1}\right| + \left|S_{2}\right|  
  \end{equation}
 Taking $S = \emptyset$ gives $bc(F_{\mathcal{VI}(Q)}) = \left| \mathcal{I}_{o}(Q) \right|$, from which the lemma follows. 
 \end{proof}
 
 \section{Quasi-tree Expansions}\label{expan}
 
 We will deduce a quasi-tree expansion for the polynomial $\pol_{G}$ when $G$ is a ribbon graph (or equivalently, a cellularly embedded graph). We will assume $G$ is connected, so that $c(G) = 1$ and quasi-tree subgraphs actually exist. Observe that since $c(G) = c(G^{*})$, we will also then have $c(G^{*}) = 1$. Since for spanning subgraphs $F$ of $G$, $bc(F) = bc(F^{*})$, we have that $Q$ is a quasi-tree of $G$ if and only if the corresponding subgraph $Q*$ of $G^{*}$ is also a quasi-tree. Thus there is a natural bijective correspondence between quasi-trees of $G$ and $G^{*}$. 
 
 Given a total order $\prec$ on $E(G)$, there is a natural total order on $E(G^{*})$ induced by the bijection $e \longleftrightarrow e^{*}$, where $e \in E(G)$ and $e^{*}$ is the unique edge of $G^{*}$ that intersects $e$ on $\Sigma$. Let $\prec^{*}$ be this total order on $E(G^{*})$. As the next lemma shows, the activities of $Q^{*}$ with respect to $\prec^{*}$ are essentially the same as those for $Q$ with respect to $\prec$. In the lemma, equality should be understood under the correspondence $e \longleftrightarrow e^{*}$.
 
 \begin{lemma}\label{dualactiv}
 
 \begin{enumerate}
 \item
 $\mathcal{DI}(Q^{*}) = \mathcal{DE}(Q)$
 \item
 $\mathcal{I}_{o}(Q^{*}) = \mathcal{E}_{o}(Q)$
 \item
  $\mathcal{I}_{n}(Q^{*}) = \mathcal{E}_{n}(Q)$
 \item 
  $\mathcal{DE}(Q^{*}) = \mathcal{DI}(Q)$
  \item
 $\mathcal{E}_{o}(Q^{*}) = \mathcal{I}_{o}(Q)$
 \item 
 $\mathcal{E}_{n}(Q^{*}) = \mathcal{I}_{n}(Q)$
 \end{enumerate}  
 
 \end{lemma}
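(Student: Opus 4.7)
The plan is to reduce all six identities to a single observation: the one-vertex ribbon graph $(G^{*})^{E(Q^{*})}$ used to define the activities of $Q^{*}$ coincides, under the canonical edge bijection $e\leftrightarrow e^{*}$, with the one-vertex ribbon graph $G^{E(Q)}$ used to define the activities of $Q$. Once this identification is in place, the notions of linking and loop orientability, as well as the induced total order, all transfer transparently, while the internal/external dichotomy is the only thing that gets swapped.

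First I would verify the identification $(G^{*})^{E(Q^{*})} = G^{E(Q)}$. By definition, $E(Q^{*})\subseteq E(G^{*})$ corresponds under $e\leftrightarrow e^{*}$ to $E(G)\setminus E(Q)\subseteq E(G)$. Combining properties (2) and (3) of partial duality yields
\[
(G^{*})^{E(Q^{*})} = (G^{E(G)})^{E(G)\setminus E(Q)} = G^{E(G)\,\triangle\,(E(G)\setminus E(Q))} = G^{E(Q)}.
\]

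Next, in this common ribbon graph, every edge is labeled both by some $e\in E(G)$ and by the corresponding $e^{*}\in E(G^{*})$. The definition of two edges linking each other (their ribbon endpoints alternate along the unique vertex boundary) depends only on this ribbon graph, so $e$ links $e'$ with respect to $Q$ if and only if $e^{*}$ links $(e')^{*}$ with respect to $Q^{*}$. Likewise, whether an edge is an orientable loop is an intrinsic property of the underlying ribbon, so orientability is preserved under $e\leftrightarrow e^{*}$. Since $\prec^{*}$ is by definition the pullback of $\prec$ along the bijection, lower-ordered edges go to lower-ordered edges, and the linking correspondence then gives that $e$ is live (respectively dead) with respect to $Q$ if and only if $e^{*}$ is live (respectively dead) with respect to $Q^{*}$.

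Finally, the definition of $Q^{*}$ immediately yields $e\in E(Q) \iff e^{*}\notin E(Q^{*})$, so internal edges of $Q$ correspond to external edges of $Q^{*}$ and vice versa. Combining these three facts (live/dead preserved, orientable/nonorientable preserved, internal/external swapped) produces all six identities at once. The only subtle point is carefully tracking the edge identification in the first step; everything else is formal once the two ribbon graphs have been identified.
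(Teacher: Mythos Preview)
Your proof is correct and follows essentially the same approach as the paper: both reduce everything to the identity $(G^{*})^{E(Q^{*})} = G^{E(Q)}$ via properties (2) and (3) of partial duality, then observe that live/dead and orientability are intrinsic to this common one-vertex ribbon graph while internal/external gets swapped because $e^{*}\in Q^{*}\iff e\notin Q$. Your write-up is in fact slightly more explicit than the paper's in spelling out why linking and loop orientability transfer, but the argument is the same.
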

 
 \begin{proof}
 To compute the activities of $Q^{*}$, we must investigate $(G^{*})^{E(Q^{*})}$. Passing to the edges of $G$ via the edge correspondence preserves the total order on edges and thus the activities. Then, 
 \[
 (G^{*})^{E(Q^{*})} = (G^{E(G)})^{E(G)-E(Q)} = G^{E(Q)}
 \]
 The equalities of the lemma all follow immediately from this equality and the observation that since $e^{*} \in Q^{*} \iff e \notin Q$, the notions of internal and external are flipped in $G^{*}$
 \end{proof}
 
 \begin{exa} Consider the ribbon graph from example \ref{graph}. Its dual is  
 
 \raisebox{0pt}[80pt][70pt]{\begin{picture}(80,15)(-80,50)
        \put(0,0){\includegraphics[scale = .3]{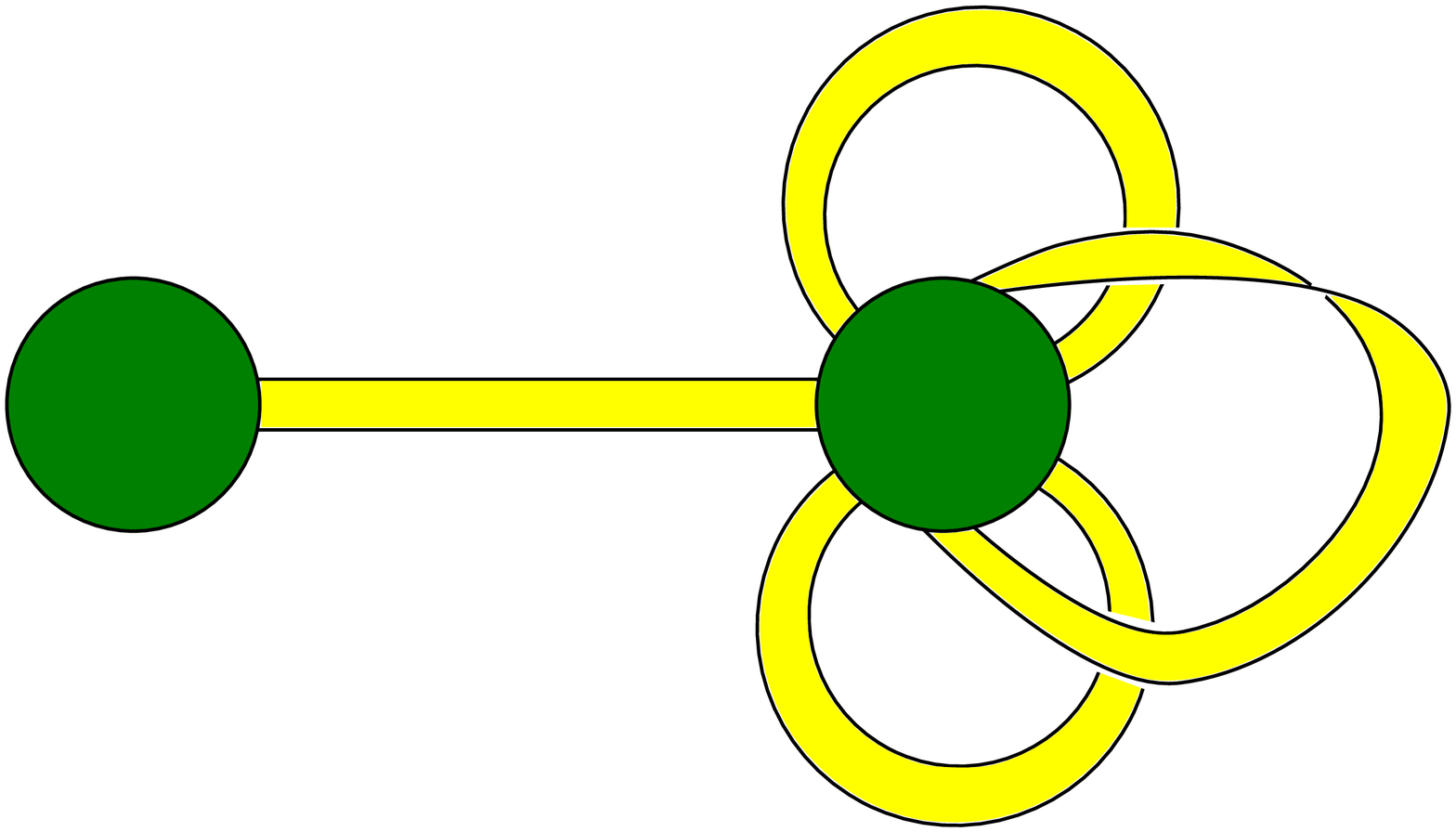}} 
        \put(60,70){$1$}
        \put(150,120){$2$}
        \put(150,-10){$3$}
        \put(210,60){$4$}
     \end{picture}}
 
 The dual quasi-tree to the quasi-tree considered in example \ref{graph} is the quasi-tree $\left\{1\right\}$. With respect to this quasi-tree, $1$ is internally live, $2,3$ are externally live, and $4$ is dead.
 \end{exa}
 We adopt the notational convention that for a subset $H \subseteq E(G)$, the corresponding spanning subgraph of $G^{*}$ will be denoted $R_{H}$. Note $(F_{H})^{*} = R_{E(G) \backslash H}$ under our notation. For a ribbon graph $G$ and a quasi-tree $Q$ of $G$, let $G_{Q}$ be the (ordinary) graph whose vertices are the connected components of $F_{\mathcal{VI}(Q)}$ and whose edges are those edges in $\mathcal{I}_{o}(Q)$. We will also consider $G^{*}_{Q^{*}}$, the graph whose vertices are the connected components of $R_{\mathcal{VE}(Q)}$ and whose edges are those edges in $\mathcal{E}_{o}(Q)$, considered as edges in $G^{*}$.
 \begin{exa}For $G$ and $Q$ the ribbon graph and quasi-tree considered in example \ref{graph}, the associated graphs are shown below  
 
\raisebox{-27pt}[0pt][20pt]{\begin{picture}(80,15)(-200,0)
        \put(0,0){\includegraphics[width=80pt]{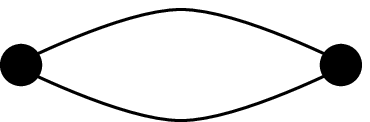}} 
        \put(37,30){$2$}
        \put(37,-10){$3$}
        \put(37,-28){$G_{Q}$}
     \end{picture}}

 \raisebox{0pt}[0pt][40pt]{\begin{picture}(80,15)(-40,0)
        \put(0,0){\includegraphics[width=80pt]{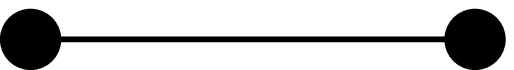}} 
        \put(37,-8){$1$}
        \put(36,-35){$G^{*}_{Q^{*}}$}
     \end{picture}}
 
 \end{exa}
 
 We now state the main theorem of this paper. 
 
 \begin{thm}\label{quasi}
 \[
 \pol_{G}(X,Y,A,B) = \sum_{Q \in \mathcal{Q}_{G}} T_{G_{Q}}(X,A) T_{G^{*}_{Q^{*}}}(Y,B) A^{s(F_{\mathcal{VI}(Q)})/2} B^{s(R_{\mathcal{VE}(Q)})/2} 
 \]
 \end{thm}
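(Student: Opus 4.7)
My plan is to apply Theorem \ref{activ} to partition the spanning subgraphs of $G$ by their associated quasi-trees, and then to show that the contribution of each quasi-tree $Q$ factors into the Tutte polynomials $T_{G_Q}(X,A)$ and $T_{G^{*}_{Q^{*}}}(Y,B)$ multiplied by the leading $A$- and $B$-powers appearing in the statement. Using that bijection, the definition of $\pol_G$ rewrites as
\[
\sum_{Q \in \mathcal{Q}_G} \sum_{S_1 \subseteq \mathcal{I}_o(Q)}\sum_{S_2 \subseteq \mathcal{E}_o(Q)} X^{c(F)-1} Y^{c(F^{*})-1} A^{s(F)/2} B^{s(F^{*})/2},
\]
with $F := F_{\mathcal{VI}(Q)\cup S_1 \cup S_2}$, and my goal is to identify the inner $S_1$-sum with $A^{s(F_{\mathcal{VI}(Q)})/2}\,T_{G_Q}(X,A)$ and the $S_2$-sum with $B^{s(R_{\mathcal{VE}(Q)})/2}\,T_{G^{*}_{Q^{*}}}(Y,B)$.

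For the $X$- and $A$-exponents the computation runs as follows. By Lemma \ref{conn}, $c(F) = c(F_{\mathcal{VI}(Q)\cup S_1})$, so the externally live orientable edges in $S_2$ do not contribute to the $X$-exponent. By Lemma \ref{bc}, $bc(F) = bc(F_{\mathcal{VI}(Q)}) - |S_1| + |S_2|$. Substituting into $s(F) = 2c(F) - v(F) + e(F) - bc(F)$ together with $v(F) = v(G)$ and $e(F) = |\mathcal{VI}(Q)| + |S_1| + |S_2|$, the $|S_2|$ contributions cancel and I obtain
\[
\tfrac{1}{2}s(F) - \tfrac{1}{2}s(F_{\mathcal{VI}(Q)}) = c(F_{\mathcal{VI}(Q)\cup S_1}) - c(F_{\mathcal{VI}(Q)}) + |S_1|.
\]
The right-hand side is precisely the nullity $n(F')$ of the spanning subgraph $F'$ of $G_Q$ with edge set $S_1$: the vertices of $G_Q$ are the components of $F_{\mathcal{VI}(Q)}$, so $v(F') = c(F_{\mathcal{VI}(Q)})$, while $e(F')=|S_1|$ and $c(F') = c(F_{\mathcal{VI}(Q)\cup S_1})$. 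Because $Q$ is connected and $G_Q$ is obtained from $Q$ by contracting the edges in $\mathcal{VI}(Q)$, $G_Q$ is connected, so $c(F)-1 = c(F')-c(G_Q)$. Hence
\[
X^{c(F)-1} A^{s(F)/2} = A^{s(F_{\mathcal{VI}(Q)})/2}\, X^{c(F')-c(G_Q)} A^{n(F')},
\]
and summing over $S_1 \subseteq \mathcal{I}_o(Q)$ produces $A^{s(F_{\mathcal{VI}(Q)})/2}\,T_{G_Q}(X,A)$.

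For the dual factor I invoke Lemma \ref{dualactiv}, which identifies the activities of $Q^{*}$ with those of $Q$ under the correspondence $e \leftrightarrow e^{*}$. Unpacking $E(F^{*}) = E(G)\setminus E(F)$ shows that $F^{*}$ is the subgraph assigned by Theorem \ref{activ} (applied to $G^{*}$) to the triple $(Q^{*}, S_2', S_1')$ with $S_1' := \mathcal{I}_o(Q)\setminus S_1 \subseteq \mathcal{E}_o(Q^{*})$ and $S_2' := \mathcal{E}_o(Q)\setminus S_2 \subseteq \mathcal{I}_o(Q^{*})$. As $S_2$ ranges over subsets of $\mathcal{E}_o(Q)$, $S_2'$ ranges over subsets of $\mathcal{I}_o(Q^{*})$, and the same calculation on $G^{*}$ gives
\[
Y^{c(F^{*})-1} B^{s(F^{*})/2} = B^{s(R_{\mathcal{VE}(Q)})/2}\, Y^{c(F'')-c(G^{*}_{Q^{*}})} B^{n(F'')},
\]
where $F''$ is the spanning subgraph of $G^{*}_{Q^{*}}$ with edge set $S_2'$. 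The independent double sum over $S_1$ and $S_2'$ then factors as $T_{G_Q}(X,A)\,T_{G^{*}_{Q^{*}}}(Y,B)$, and summing over $Q \in \mathcal{Q}_G$ yields the stated expansion.

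The step I expect to demand the most care is the arithmetic identity $\tfrac{1}{2}s(F) - \tfrac{1}{2}s(F_{\mathcal{VI}(Q)}) = n(F')$; it is the bridge from the topological invariant $s$ of a spanning ribbon subgraph to the combinatorial nullity on the contracted graph $G_Q$, and it rests crucially on Lemmas \ref{conn} and \ref{bc}, both of which in turn depend on the fact that live orientable edges never link each other in $G^{E(Q)}$.
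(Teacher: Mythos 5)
Your proposal is correct and follows essentially the same route as the paper's own proof: decompose via Theorem \ref{activ} and Lemma \ref{dualactiv}, split $S$ into $S_1 \sqcup S_2$, and use Lemmas \ref{conn} and \ref{bc} to show $s(F_{\mathcal{VI}(Q)\cup S}) = s(F_{\mathcal{VI}(Q)}) + 2n(W)$ (your $F'$ is the paper's $W$, your $F''$ its $W^{*}$), after which the double sum factors into the two Tutte polynomials. The only difference is cosmetic: you spell out explicitly why $c(G_Q)=1$ and why $F^{*}$ corresponds to the triple $(Q^{*},S_2',S_1')$, points the paper leaves implicit.
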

 
 \begin{proof}
 By equation \eqref{ribbon} we can write $\pol_{G}$ as 
 
 \begin{equation}
 \pol_{G}(X,Y,A,B) = \sum_{F \subseteq G} X^{c(F)-1} Y^{c(F^{*}) - 1} A^{s(F)/2}B^{s(F^{*})/2}
 \end{equation}
 Using Lemma \ref{activ} and the equalities of Lemma \ref{dualactiv}, we can rewrite this sum as 
 \begin{equation}\label{dblesum}
 \sum_{Q \in \mathcal{Q}_{G}} \sum_{S \subseteq \mathcal{I}_{o}(Q) \cup \mathcal{E}_{o}(Q)} X^{c(F_{\mathcal{VI}(Q) \cup S})-1} Y^{c(R_{\mathcal{VE}(Q) \cup S^{c}})-1} 
 A^{s(F_{\mathcal{VI}(Q) \cup S})/2} B^{s(R_{\mathcal{VE}(Q) \cup S^{c}})/2}
 \end{equation}
 
 where $S^{c} = \mathcal{I}_{o}(Q) \cup \mathcal{E}_{o}(Q) - S$.
 
 Write $S = S_{1} \sqcup S_{2}$, where $S_{1} \subseteq \mathcal{I}_{o}(Q)$ and $S_{2} \subseteq \mathcal{E}_{o}(Q)$. Likewise $S^{c} = S_{1}^{c} \sqcup S_{2}^{c}$. Application of Lemma \ref{conn} yields
 \begin{equation}\label{conn2}
 c(F_{\mathcal{VI}(Q) \cup S}) =  c(F_{\mathcal{VI}(Q) \cup S_{1}})
 \end{equation}
 and also yields, upon applying the lemma to the subgraphs of $G^{*}$,
 \begin{equation}\label{dualconn}
 c(R_{\mathcal{VE}(Q) \cup S^{c}}) =  c(R_{\mathcal{VE}(Q) \cup S_{2}^{c}})
 \end{equation}
 
 Since internal edges are external edges of the dual and vice versa, it is easy to see that the equation \eqref{dualconn} simply rephrases Lemma \ref{conn} for the dual graph. 
 
 Let $W$  be the spanning subgraph of $G_{Q}$ whose edge set is $S_{1}$, and let $W^{*}$ be the spanning subgraph of $G^{*}_{Q^{*}}$ whose edge set is $S_{2}^{c}$. Recall that
 \begin{equation}
 s(F_{\mathcal{VI}(Q) \cup S}) = 2c(F_{\mathcal{VI}(Q) \cup S_{1}}) - v(G) + e(F_{\mathcal{VI}(Q) \cup S}) - bc(F_{\mathcal{VI}(Q) \cup S})
 \end{equation}
 
 Apply Lemma \ref{bc} and rearrange terms.
 \begin{align*}
 &= 2c(F_{\mathcal{VI}(Q) \cup S_{1}}) - v(G) + (e(F_{\mathcal{VI}(Q)}) + \left|S_{1}\right| + \left|S_{2}\right|) - (bc(F_{\mathcal{VI}(Q)})- \left|S_{1}\right| + \left|S_{2}\right|) \\
 &= (2c(F_{\mathcal{VI}(Q)}) - v(G) + e(F_{\mathcal{VI}(Q)}) - bc(F_{\mathcal{VI}(Q)})) \\
 &\; \; \; + (2c(F_{\mathcal{VI}(Q) \cup S_{1}}) + 2\left|S_{1}\right| - 2c(F_{\mathcal{VI}(Q)})) \\
 &= s(F_{\mathcal{VI}(Q)}) + 2n(W)
 \end{align*}
 
 A parallel computation in the dual graph yields
 \begin{equation}
 s(R_{\mathcal{VE}(Q) \cup S^{c}}) = s(R_{\mathcal{VE}(Q)}) + 2n(W^{*})
 \end{equation}
 
 We rewrite the inner sum of \eqref{dblesum}, using these results, as
 \begin{equation}\label{innersum}
 \sum_{S \subseteq \mathcal{I}_{o}(Q) \cup \mathcal{E}_{o}(Q)} X^{c(W)-1} Y^{c(W^{*})-1}
  A^{s(F_{\mathcal{VI}(Q)})/2 + n(W)} B^{s(R_{\mathcal{VE}(Q)})/2 + n(W^{*})}
  \end{equation}
  $W$ depends only on $S_{1}$, and $W^{*}$ depends only on $S_{2}$. Thus we may sum first over $S_{2}$, then over $S_{1}$. \eqref{innersum} becomes
  \begin{align*}
  &\sum_{S_{1} \subseteq \mathcal{I}_{o}(Q)}X^{c(W)-1}A^{s(F_{\mathcal{VI}(Q)})/2 + n(W)} \sum_{S_{2} \subseteq \mathcal{E}_{o}(Q)} Y^{c(W^{*})-1} B^{s(R_{\mathcal{VE}(Q)})/2 + n(W^{*})} \\
  &= A^{s(F_{\mathcal{VI}(Q)})/2} B^{s(R_{\mathcal{VE}(Q)})/2} \sum_{S_{1} \subseteq \mathcal{I}_{o}(Q)}X^{c(W)-1}A^{n(W)} \sum_{S_{2} \subseteq \mathcal{E}_{o}(Q)} Y^{c(W^{*})-1}B^{n(W^{*})} \\
  &= A^{s(F_{\mathcal{VI}(Q)})/2} B^{s(R_{\mathcal{VE}(Q)})/2} T_{G_{Q}}(X,A)T_{G^{*}_{Q^{*}}}(Y,B) 
  \end{align*}
 \end{proof}

\section{Derivation of expansions for other polynomials}
  
 \subsection{Bollob\'as-Riordan Polynomial}
 In this section we will rederive the quasi-tree expansion for the Bollob\'as-Riordan polynomial given in \cite{CKS1}, \cite{VT2}, and \cite{De}, using our expansion for $\pol_{G}$. Recall that, for a ribbon graph $G$, the Bollob\'as-Riordan polynomial is defined by,
 \begin{equation}\label{trans}
 BR_{G}(X,Y,Z) = \sum_{F \subseteq G}X^{c(F)-c(G)}Y^{n(F)}Z^{c(F)+n(F)-bc(F)}
 \end{equation}
 
 where the sum is taken over all spanning ribbon subgraphs $F$ of $G$. Note that the exponent of $Z$ is equal to $s(F)$, which is equal to $2g(F)$ when $F$ is orientable. 
 
The following theorem is a straightforward computation which is carried out in the case of orientable ribbon graphs in \cite{Kru}, but easily extends to the case when $G$ is not orientable by using the formulation \ref{ribbon} of $P_{G}$. 
 
 \begin{thm}
 \[
 BR_{G}(X,Y,Z) = Y^{s(G)/2} \pol_{G}(X,Y,YZ^2,Y^{-1})
 \]
 \end{thm}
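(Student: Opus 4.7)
The plan is to verify the identity monomial-by-monomial, using the ribbon graph form \eqref{ribbon} of the generalized Krushkal polynomial. First I would substitute $A = YZ^2$ and $B = Y^{-1}$ into \eqref{ribbon} and multiply by $Y^{s(G)/2}$. The resulting summand indexed by a spanning subgraph $F$ of $G$ becomes
\[
X^{c(F)-c(G)}\, Y^{\,s(G)/2\, +\, c(F^{*})\,-\,c(G^{*})\, +\, s(F)/2\, -\, s(F^{*})/2}\, Z^{s(F)}.
\]
The corresponding summand in $BR_G(X,Y,Z)$ is $X^{c(F)-c(G)}Y^{n(F)}Z^{c(F)+n(F)-bc(F)}$. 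Using the ribbon graph formula $s(F) = 2c(F)-v(F)+e(F)-bc(F) = c(F)+n(F)-bc(F)$, the exponents of $X$ and of $Z$ match automatically, so the problem reduces to the scalar identity
\[
n(F) \;=\; c(F^{*}) - c(G^{*}) + s(G)/2 + s(F)/2 - s(F^{*})/2.
\]

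This is exactly the combinatorial identity \eqref{comb} proved inside Theorem~\ref{Tutte}, once one translates from the embedded-graph setup to the ribbon-graph setup. Viewing the ribbon graph $G$ as a cellulation of the closed surface $\Sigma$ obtained by capping off the boundary components of $G$, one has $c(\Sigma\setminus F) = c(F^{*})$, $c(\Sigma) = c(G^{*}) = c(G)$, $s^{\perp}(F) = s(F^{*})$, and $\dis = 2c(\Sigma) - \chi(\Sigma) = 2c(G) - (v(G)-e(G)+bc(G)) = s(G)$. Since the proof of Theorem~\ref{Tutte} never invoked orientability of $\Sigma$, the identity \eqref{comb} applies without modification in the present (possibly non-orientable) ribbon graph setting, and the theorem follows.

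There is no substantive obstacle here beyond careful bookkeeping; the only genuinely nontrivial combinatorial content is already packaged inside Theorem~\ref{Tutte}. The Bollob\'as--Riordan specialization is simply a one-variable enrichment of the Tutte specialization already established, with the new variable $Z$ absorbing the exponent $s(F)$ by way of the substitution $A = YZ^{2}$. Setting $Z = 1$ recovers Theorem~\ref{Tutte} as a consistency check, since $BR_G(X,Y,1) = T_G(X,Y)$.
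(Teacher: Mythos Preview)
Your proof is correct and follows exactly the approach the paper indicates: it calls the result ``a straightforward computation'' using the ribbon formulation \eqref{ribbon}, deferring to \cite{Kru} for the orientable case, and you have written out precisely that computation. The reduction of the $Y$-exponent to identity \eqref{comb} from Theorem~\ref{Tutte}, together with the dictionary $s(G)=\dis$, $c(F^{*})=c(\Sigma\setminus F)$, $s(F^{*})=s^{\perp}(F)$, is the entire content, and you have it right.
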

 
 We will use this identity to derive the quasi-tree expansion of the Bollob\'as-Riordan polynomial.
 
 \begin{thm}
 
 \[
 BR_{G}(X,Y,Z) = \sum_{Q \in \mathcal{Q}_{G}}Y^{n(F_{\mathcal{VI}(Q)})}Z^{(k-n+bc)(F_{\mathcal{VI}(Q)})}(1+Y)^{\left| \mathcal{E}_{o}(Q) \right|}
 T_{G_{Q}}(X,YZ^2)
 \]
 
 \end{thm}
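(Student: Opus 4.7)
The plan is to leverage the specialization $BR_G(X,Y,Z) = Y^{s(G)/2}\pol_G(X,Y,YZ^2,Y^{-1})$ of the preceding theorem together with the quasi-tree expansion of $\pol_G$ from Theorem \ref{quasi}. Substituting $A = YZ^2$ and $B = Y^{-1}$ into Theorem \ref{quasi} yields
\[
\pol_G(X,Y,YZ^2,Y^{-1}) = \sum_{Q \in \mathcal{Q}_G} T_{G_Q}(X,YZ^2)\,T_{G^*_{Q^*}}(Y,Y^{-1})\,(YZ^2)^{s(F_{\mathcal{VI}(Q)})/2}\,Y^{-s(R_{\mathcal{VE}(Q)})/2}.
\]
The factor $T_{G_Q}(X,YZ^2)$ already appears in the target expansion, so the remaining task is twofold: first, to extract the factor $(1+Y)^{|\mathcal{E}_o(Q)|}$ from the ``dual'' Tutte factor $T_{G^*_{Q^*}}(Y,Y^{-1})$; second, to show that the remaining monomial in $Y$ and $Z$, once combined with the overall prefactor $Y^{s(G)/2}$, matches the target weight attached to $F_{\mathcal{VI}(Q)}$.

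For the first task I would establish the elementary identity that, for any graph $H$,
\[
T_H(Y,Y^{-1}) \;=\; \sum_{W \subseteq H} Y^{c(W)-c(H)-n(W)} \;=\; Y^{-n(H)}(1+Y)^{e(H)},
\]
by rewriting the exponent $c(W)-c(H)-n(W)$ as $v(H) - c(H) - e(W)$ and summing $Y^{-e(W)}$ over spanning subgraphs as $(1+Y^{-1})^{e(H)}$. Applied to $G^*_{Q^*}$, whose edge set is $\mathcal{E}_o(Q)$, this immediately produces the desired factor $(1+Y)^{|\mathcal{E}_o(Q)|}$ along with a correction $Y^{-n(G^*_{Q^*})}$ to be absorbed in the next step.

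For the second task, the $Z$-exponent evaluates cleanly to $s(F_{\mathcal{VI}(Q)})$ from the $(YZ^2)^{s(F_{\mathcal{VI}(Q)})/2}$ factor, matching the target exponent under the natural identification via $s = 2c - \chi(\mathcal{S})$ with $\chi = v - e + bc$. The $Y$-exponent accumulates to
\[
\tfrac{1}{2}\bigl(s(G) + s(F_{\mathcal{VI}(Q)}) - s(R_{\mathcal{VE}(Q)})\bigr) - n(G^*_{Q^*}),
\]
which I claim equals $n(F_{\mathcal{VI}(Q)})$. The main obstacle of the proof lies in verifying this identity: I would unpack each $s$-invariant via $s = 2c - v + e - bc$ and use the facts $v(G^*_{Q^*}) = c(R_{\mathcal{VE}(Q)})$, $e(G^*_{Q^*}) = |\mathcal{E}_o(Q)|$, and $c(G^*_{Q^*}) = c(Q^*) = 1$ (the last because adjoining the edges $\mathcal{E}_o(Q)$ to $R_{\mathcal{VE}(Q)}$ produces $R_{E(Q^*)} = Q^*$, which is a connected quasi-tree), along with the complementary relation $bc(F_H) = bc(F_{E(G) \setminus H})$. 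These ingredients cause the various $v$, $e$, $bc$, $c$ contributions to telescope into $n(F_{\mathcal{VI}(Q)})$. The step is purely combinatorial bookkeeping, but it is where any miscount or sign slip would surface, so it warrants the most care.
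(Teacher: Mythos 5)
Your proposal follows the paper's own route essentially step for step: substitute $A=YZ^2$, $B=Y^{-1}$ into Theorem \ref{quasi}, evaluate the dual Tutte factor at $(Y,Y^{-1})$ to extract $(1+Y)^{|\mathcal{E}_o(Q)|}$, and verify that the residual $Y$-exponent collapses to $n(F_{\mathcal{VI}(Q)})$; your closed form $T_H(Y,Y^{-1})=Y^{-n(H)}(1+Y)^{e(H)}$ is correct and is just a tidier packaging of the paper's computation, and the exponent identity you assert is true. One caution on the step you defer as ``bookkeeping'': the ingredients you list do not quite suffice. The complementary relation should read $bc(F_H)=bc(R_{E(G)\backslash H})$ (the dual subgraph on the complementary edge set), not $bc(F_{E(G)\backslash H})$; and after applying it to get $bc(R_{\mathcal{VE}(Q)})=bc(F_{\mathcal{VI}(Q)\cup\mathcal{I}_o(Q)\cup\mathcal{E}_o(Q)})$ you still need Lemma \ref{bc} to reduce this to $bc(F_{\mathcal{VI}(Q)})-|\mathcal{I}_o(Q)|+|\mathcal{E}_o(Q)|$. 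That lemma is where the linking structure of live orientable edges actually enters, so it is not pure telescoping; with it included, your identity $\tfrac{1}{2}\bigl(s(G)+s(F_{\mathcal{VI}(Q)})-s(R_{\mathcal{VE}(Q)})\bigr)-n(G^{*}_{Q^{*}})=n(F_{\mathcal{VI}(Q)})$ checks out and the proof closes exactly as in the paper.
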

 
\begin{proof}
We begin with the expansion of $\pol_{G}$ in Theorem \ref{quasi} and apply the substitution of Theorem \ref{trans}. The result of this substitution is,
\begin{equation}\label{bolqua}
 Y^{\dis / 2}\sum_{Q \in \mathcal{Q}_{G}} T_{G_{Q}}(X,YZ^2) T_{G^{*}_{Q^{*}}}(Y,Y^{-1})Z^{s(F_{\mathcal{VI}(Q)})}Y^{\frac{1}{2}(s(F_{\mathcal{VI}(Q)})-s(R_{\mathcal{VE}(Q)}))}
 \end{equation}
 
 Now recall that for any spanning subgraph $F$, $s(F) = 2k(F) - \chi(F)$. Thus $s(F_{\mathcal{VI}(Q)}) = (k-n+bc)(F_{\mathcal{VI}(Q)})$. Next, consider $T_{G^{*}_{Q^{*}}}(Y,Y^{-1})$.
 \begin{align*}
 T_{G^{*}_{Q^{*}}}(Y,Y^{-1}) &= \sum_{W^{*} \subseteq G^{*}_{Q^{*}}}Y^{c(W^{*})-1-n(W^{*})} \\
 &= \sum_{W^{*} \subseteq G^{*}_{Q^{*}}}Y^{v(W^{*})-e(W^{*})-1} \\ 
 &= Y^{c(R_{\mathcal{VE}(Q)})-1}\sum_{W^{*} \subseteq G^{*}_{Q^{*}}}Y^{-e(W^{*})} \\
 &= Y^{c(R_{\mathcal{VE}(Q)})-1}(1+Y^{-1})^{\left| \mathcal{E}_{o}(Q) \right|}
 \end{align*}
 
 Lastly, we consider the quantity $\dis + s(F_{\mathcal{VI}(Q)})-s(R_{\mathcal{VE}(Q)})$. Writing the summands in their combinatorial form, this is equivalent to,
 \begin{align*}
 &2+2(c(F_{\mathcal{VI}(Q)})-c(R_{\mathcal{VE}(Q)}))+\left| E(G) \right| + \left|\mathcal{VI}(Q)\right| - \left|\mathcal{VE}(Q)\right| \\
 &- 2v(G) + bc(G) - bc(G) - bc(F_{\mathcal{VI}(Q)})+bc(R_{\mathcal{VE}(Q)}) \\ 
 &= 2 + 2\left|\mathcal{VI}(Q)\right| - 2v(G)+2c(F_{\mathcal{VI}(Q)}-2c(R_{\mathcal{VE}(Q)})) \\  
 &+ \left|\mathcal{I}_{o}(Q) \right| + \left| \mathcal{E}_{o}(Q) \right| - bc(F_{\mathcal{VI}(Q)})+bc(R_{\mathcal{VE}(Q)}) \\ 
 &= 2 - 2c(R_{\mathcal{VE}(Q)}) + 2n(F_{\mathcal{VI}(Q)}) + \left|\mathcal{I}_{o}(Q) \right| + \left| \mathcal{E}_{o}(Q) \right| - bc(F_{\mathcal{VI}(Q)})+bc(\mathcal{VE}(Q))
 \end{align*}
 
 We have an equality $ bc(R_{\mathcal{VE}(Q)}) = bc(F_{\mathcal{VI}(Q) \cup S} )$ where $S = \mathcal{I}_{o}(Q) \cup \mathcal{E}_{o}(Q)$. From Lemma \ref{bc} we deduce that 
 \begin{equation}
 bc(R_{\mathcal{VE}(Q)}) = bc(F_{\mathcal{VI}(Q)}) - \left|\mathcal{I}_{o}(Q) \right| + \left| \mathcal{E}_{o}(Q) \right|
 \end{equation}
	Thus, we conclude that 
	\begin{equation}
	\dis + s(F_{\mathcal{VI}(Q)})-s(R_{\mathcal{VE}(Q)}) = 2 - 2c(R_{\mathcal{VE}(Q)}) + 2n(F_{\mathcal{VI}(Q)}) + 2\left| \mathcal{E}_{o}(Q) \right|
 \end{equation}
 Summarizing our calculations, equation \ref{bolqua} becomes,
 \begin{equation}
  \sum_{Q \in \mathcal{Q}_{G}} T_{G_{Q}}(X,YZ^2) Y^{c(\mathcal{VE}(Q))-1}(1+Y^{-1})^{\left| \mathcal{E}_{o}(Q) \right|} Z^{s(F_{\mathcal{VI}(Q)})}
  Y^{1 - c(R_{\mathcal{VE}(Q)}) + n(F_{\mathcal{VI}(Q)}) + \left| \mathcal{E}_{o}(Q) \right|}
 \end{equation}
 This is easily seen to be equivalent to the equation of the theorem. 
\end{proof}

 \subsection{Las Vergnas Polynomial}
 The Las Vergnas polynomial is a special case of the Tutte polynomial of a matroid perspective, introduced in \cite{LV}. It was recently shown to be a specialization of the Krushkal polynomial in \cite{ACEMS}. We use this fact and Theorem \ref{quasi} to derive a quasi-tree expansion for the Las Vergnas polynomial. We first give a brief review of matroids in order to define the Las Vergnas polynomial. For more on the theory of matroids, see \cite{Wel}.
 
\begin{defn}\label{mat}
A matroid is a pair $(M,r)$, where $M$ is a finite set and $r: \mathcal{P}(M) \longrightarrow \mathbb{N}\cup \left\{0\right\}$, where $\mathcal{P}$ is the powerset of $M$. The function $r$ must satisfy the following axioms:
\begin{enumerate}
\item
$r(\emptyset) = 0$
\item
For $H \in \mathcal{P}(M)$ and $y \notin H$, we have 
\[r(H \cup \left\{y\right\}) = \left\{ 
\begin{array}{lr} r(H), \;\; \text{or} \\
r(H)+1
\end{array}
\right.
\]
\item
For $y,z \notin H$, if $r(H \cup \left\{y\right\}) = r(H \cup \left\{z\right\}) = r(H)$, then $r(H \cup \left\{y,z\right\}) = r(H)$.
\end{enumerate} 

\end{defn}

Matroids generalize the notion of linear independence in vector spaces and circuits in a graph: We call $H \in \mathcal{P}(M)$ \emph{independent} if $r(H) = \left|H\right|$, and we call $H$ a \emph{circuit} if $r(H) = \left|H\right| - 1$. If we let $M$ be a finite collection of vectors in a vector space and $r$ the function which assigns to any subcollection of vectors the dimension of the subspace spanned by those vectors, then $(M,r)$ is a matroid and the independent sets are precisely the subcollections of linearly independent vectors. 

The example we are more interested in is that of the \emph{cycle matroid} of a graph. For a graph $G$, we let $M = E(G)$ and define $r(F) = v(G)-c(F)$ for $F$ a subset of edges, where $c(F)$ is the number of connected components of the spanning subgraph consisting of only edges in $F$. As may be readily checked, $(E(G), r)$ is a matroid and the circuits of this matroid are the subsets of edges which form a cycle in $G$, while the independent sets are those edge subsets containing no cycles. We will denote this matroid by $\mathcal{C}(G)$.

We may also take the \emph{dual} of a matroid. For a matroid $(M,r)$, we define its dual to be the matroid $(M,r^{*})$, where $r^{*}$ is defined by $r^{*}(H) = \left|H\right| + r(M \backslash H) - r(M)$. The dual of $\mathcal{C}(G)$ is the \emph{bond matroid} of the graph $G$, which we denote $\mathcal{B}(G)$. The circuits of $\mathcal{B}(G)$ are the minimal edge cuts of $G$, called \emph{bonds} of $G$. These are the \emph{minimal} edge subsets which, when removed from $G$, increase the number of connected components of $G$. Lastly, for any rank function $r$, we define the nullity function $n$ by $n(H) = \left|H\right| - r(H)$.

We now restrict our attention to the case of a graph $G$ cellularly embedded in a (not necessarily orientable) surface $\Sigma$ and its dual in $\Sigma$, $G^{*}$. Let $r$ be the rank function of $\mathcal{C}(G)$, and let $\bar{r}$ be the rank function of $\mathcal{B}(G^{*})$. The Las Vergnas polynomial is defined as
\begin{equation}\label{vergnas}
LV_{G, \Sigma}(X,Y,Z) = \sum_{F \subset E(G)}(X-1)^{r(E(G))-r(F)}(Y-1)^{\bar{n}(F)}Z^{(\bar{r}(E(G^{*}))-\bar{r}(F))-(r(E(G))-r(F))}
\end{equation}
There is a natural map $\mathcal{B}(G^{*}) \longrightarrow \mathcal{C(G)}$, given by the identification of corresponding edges of $G$ and $G^{*}$. This map is a special case of a \emph{matroid perspective}, and the Las Vergnas polynomial is the \emph{Tutte polynomial} of this matroid perspective \cite{LV}.  
The following theorem was shown in \cite{ACEMS}, in the particular case that $\Sigma$ is orientable. 
\begin{thm}\label{kruver}
\[
LV_{G,\Sigma}(X,Y,Z) = Z^{g(\Sigma)}P_{G,\Sigma}(X-1,Y-1,Z^{-1},Z)
\]
\end{thm}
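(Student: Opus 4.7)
The plan is to verify the identity monomial-by-monomial, comparing the contribution of each spanning subgraph $F \subseteq E(G)$ to both sides. This mirrors the argument of \cite{ACEMS} in the orientable case, so the task is really to observe that the same matching goes through with no orientability hypothesis, once $g(\Sigma)$ is interpreted as $s(\Sigma)/2 = c(\Sigma) - \chi(\Sigma)/2$ (so that $Z^{g(\Sigma)}$ is read in $\mathbb{Z}[X,Y,Z^{1/2}]$ when $\Sigma$ is nonorientable with odd nonorientable genus, exactly as $A^{s(F)/2}$ and $B^{s^{\perp}(F)/2}$ are interpreted in $\pol_{G,\Sigma}$).

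First I would match the exponents of $(X-1)$ and $(Y-1)$. For the cycle matroid, $r(F) = v(G)-c(F)$, so $r(E(G)) - r(F) = c(F) - c(G)$, which is the exponent of $X$ in $\pol_{G,\Sigma}$. For the bond matroid, the duality formula
\[
\bar{r}(F) = |F| + r_{G^{*}}(E(G^{*}) \setminus F) - r_{G^{*}}(E(G^{*}))
\]
combined with the identification $E(G) \longleftrightarrow E(G^{*})$ gives $E(G^{*}) \setminus F \longleftrightarrow F^{*}$, whence $\bar{r}(F) = |F| - c(F^{*}) + c(G^{*})$ and $\bar{n}(F) = c(F^{*}) - c(G^{*})$. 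Since $c(F^{*}) = c(\Sigma \setminus F)$ (already noted in the proof of Theorem \ref{dual}) and $c(G^{*}) = c(\Sigma)$, this equals $c(\Sigma \setminus F) - c(\Sigma)$, exactly the exponent of $Y$ in $\pol_{G,\Sigma}$.

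The main step is matching the exponent of $Z$. On the Krushkal side this exponent is $g(\Sigma) - s(F)/2 + s^{\perp}(F)/2$; on the Las Vergnas side it is $(\bar{r}(E(G^{*})) - \bar{r}(F)) - (r(E(G)) - r(F))$, which the computation above reduces to $v(G) - \chi(\Sigma) - |F| + c(\Sigma \setminus F) - c(F) + c(G)$. Substituting the combinatorial formulas
\[
\chi(\mathcal{S}(F)) = v(G) - |F| + bc(F), \qquad \chi(\mathcal{S}^{\perp}(F)) = v(G^{*}) - (e(G) - |F|) + bc(F),
\]
together with the Euler identity $v(G) - e(G) + v(G^{*}) = \chi(\Sigma)$ for the cellulation $G$, the difference between the two exponents collapses to $g(\Sigma) - s(\Sigma)/2$, which vanishes by our convention.

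The main potential obstacle is ensuring that every combinatorial ingredient used above is orientability-blind. The Euler characteristic formulas for $\mathcal{S}(F)$ and $\mathcal{S}^{\perp}(F)$ remain valid because they come from cellulations and depend only on vertex, edge, and face counts; the identity $c(F^{*}) = c(\Sigma \setminus F)$ is topological and makes no reference to orientations; and the bond-matroid duality is purely matroidal. The only delicate point is the reading of $Z^{g(\Sigma)}$, which we handle by taking $g(\Sigma) := s(\Sigma)/2$. With this convention in place, the argument is a direct extension of \cite{ACEMS} and the monomial matchings above produce the required equality.
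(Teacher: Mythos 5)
Your proposal is correct and follows essentially the same route as the paper's proof of the generalized statement (Theorem \ref{genkruver}): a monomial-by-monomial comparison, with the $X$- and $Y$-exponents handled by the identical rank computations in $\mathcal{C}(G)$ and $\mathcal{B}(G^{*})$, and the prefactor $Z^{g(\Sigma)}$ read as $Z^{\dis/2}$. The only cosmetic difference is in the $Z$-exponent: the paper cites the previously established identity \eqref{comb} applied to the dual subgraph $F^{*}$ together with $s(F)=s^{\perp}(F^{*})$, whereas you re-expand everything in terms of $v$, $e$, $bc$, and the Euler formula for the cellulation, which amounts to reproving \eqref{comb} in place.
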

We will show that the following holds in the case of the generalized Krushkal polynomial. The proof is formally the same as that of Theorem \ref{kruver}, as the hypothesis of orientability is only imposed by the fact that the Krushkal polynomial was only defined for orientable surfaces at the time.

\begin{thm}\label{genkruver}
For $\Sigma$ a compact surface, $G$ a cellularly embedded graph in $\Sigma$,
 \[
 LV_{G,\Sigma}(X,Y,Z) = Z^{\dis /2}\pol_{G,\Sigma}(X-1,Y-1,Z^{-1},Z^{1})
 \]
 \end{thm}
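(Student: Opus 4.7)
The plan is to match monomials on both sides corresponding to each spanning subgraph $F$ of $G$. After expanding the Las Vergnas polynomial via \eqref{vergnas} and the right-hand side $Z^{\dis/2}\pol_{G,\Sigma}(X-1,Y-1,Z^{-1},Z)$ via the definition of $\pol_{G,\Sigma}$, the identity reduces to three separate comparisons of exponents: one for $X-1$, one for $Y-1$, and one for $Z$.

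For the exponent of $X-1$, using $r(H) = v(G) - c(H)$ for the cycle matroid of $G$, one immediately obtains $r(E(G)) - r(F) = c(F) - c(G)$, matching the exponent of $X-1$ in $\pol_{G,\Sigma}$. For the exponent of $Y-1$, the matroid duality formula gives $\bar{r}(F) = |F| + r_{G^*}(E(G^*) \setminus F) - r_{G^*}(E(G^*))$, where $r_{G^*}$ is the rank function of $\mathcal{C}(G^*)$. Under the edge bijection $E(G) \longleftrightarrow E(G^*)$, the subgraph of $G^*$ with edge set $E(G^*) \setminus F$ is precisely the dual subgraph $F^*$, which reduces the formula to $\bar{r}(F) = |F| - c(F^*) + c(G^*)$ and hence $\bar{n}(F) = c(F^*) - c(G^*)$. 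Combining this with $c(G^*) = c(G) = c(\Sigma)$ and $c(F^*) = c(\Sigma \setminus F)$ (both observed in the proof of Theorem \ref{dual}) yields $\bar{n}(F) = c(\Sigma \setminus F) - c(\Sigma)$, as needed.

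The main step is matching the exponent of $Z$. On the right-hand side this exponent equals $\dis/2 + s^\perp(F)/2 - s(F)/2$. I would substitute the combinatorial formulas $s(F) = 2c(F) - \chi(\mathcal{S}(F))$ and $s^\perp(F) = 2c(F^*) - \chi(\mathcal{S}^\perp(F))$, expand $\chi(\mathcal{S}(F)) = v(G) - e(F) + bc(F)$ and $\chi(\mathcal{S}^\perp(F)) = v(G^*) - (e(G)-e(F)) + bc(F)$ using $v(F) = v(G)$, $v(F^*) = v(G^*)$, $e(F^*) = e(G) - e(F)$, and $bc(F) = bc(F^*)$, and then invoke the Euler identity $\chi(\Sigma) = v(G) - e(G) + v(G^*)$ coming from the cellulation. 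On the left-hand side, one computes $\bar{r}(E(G^*)) = e(G) - v(G^*) + c(G)$ and subtracts $\bar{r}(F)$ and $r(E(G)) - r(F)$ using the rank formulas above. Both exponents collapse to the single expression $e(G) - v(G^*) - e(F) + c(F^*) - c(F) + c(G)$, completing the monomial match.

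The proof is formally identical to that of Theorem \ref{kruver} in \cite{ACEMS}; the point to verify is that no step invokes the orientability of $\Sigma$. Every ingredient above---the combinatorial definitions of $s$ and $s^\perp$, the Euler-characteristic formula for a cellulation, and the matroid rank identities---is purely combinatorial and independent of orientability, so the argument carries over verbatim. The main obstacle is simply the careful bookkeeping in the $Z$-exponent reduction; once the substitutions are in place, the identity follows by direct cancellation.
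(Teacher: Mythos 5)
Your proposal is correct and follows essentially the same route as the paper: a monomial-by-monomial match over spanning subgraphs, with identical rank-function computations for the $X-1$ and $Y-1$ exponents. The only difference is cosmetic: for the $Z$-exponent the paper shortcuts the bookkeeping by invoking the previously established identity \eqref{comb} applied to the dual graph $G^{*}$ (giving $n(F^{*})-c(F)+c(G)$ on both sides), whereas you re-derive the same quantity $e(G)-v(G^{*})-e(F)+c(F^{*})-c(F)+c(G)$ by direct expansion of $s$, $s^{\perp}$, and the Euler characteristic of the cellulation, and both calculations check out.
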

 
 \begin{proof}
We analyze the monomials of the right hand side. 
\begin{equation}
Z^{\dis /2}\pol_{G,\Sigma}(X-1,Y-1,Z^{-1/2},Z^{1/2}) = \sum_{F \subseteq G}(X-1)^{c(F)-c(G)}(Y-1)^{c(\Sigma \backslash F)-c(\Sigma)}Z^{\dis/2-s(F)/2+s^{\perp}(F)/2}
\end{equation}
We claim that the equality holds on monomials, where we identify $F \subseteq E(G)$ with its corresponding spanning subgraph. For the exponent of $X$, we have
\begin{equation}
r(E(G))-r(F) = v(G)-c(G)-v(G)+c(F) = c(F) - c(G)
\end{equation}

Now consider the exponent of $Y$ on the left. We have $\bar{n}(F) = \left|F\right|-\bar{r}(F)$. $\mathcal{B}(G^{*})$ is the dual of the cycle matroid $\mathcal{C}(G^{*})$ of $G^{*}$, so we compute 
\begin{equation}
\bar{r}(F) = \left|F\right| + r(E(G) \backslash F) - r(E(G)) = \left|F\right| + c(G^{*}) - c(G^{*} \backslash F)
\end{equation}
so that $\bar{n}(F) = c(F^{*}) - c(G^{*}) = c(\Sigma \backslash F) - c(\Sigma)$

Lastly we consider the exponent of $Z$ on the left. 
\begin{align*}
(\bar{r}(E(G^{*}))-\bar{r}(F))-(r(E(G))-r(F)) &= (\left|E(G) \backslash F\right| - v(G^{*}) + c(F^{*})) - (c(F)-c(G)) \\
&= n(F^{*})-(c(F)-c(G))
\end{align*}
Meanwhile, on the right hand side, we apply equation \ref{comb} in the dual graph $G^{*}$ to obtain 
\begin{align*}
\dis/2-s(F)/2+s^{\perp}(F)/2 &=\dis/2-s^{\perp}(F^{*})/2+s(F^{*})/2 \\
&= n(F^{*}) - c(\Sigma \backslash F^{*}) + c(\Sigma)
\end{align*}
It's clear that $c(\Sigma \backslash F^{*}) = c(F)$. 
\end{proof}

Now we can give the promised quasi-tree expansion of the Las Vergnas polynomial.

\begin{thm}
For $G$ a connected graph cellularly embedded in a compact surface $\Sigma$, we have
\[
LV_{G,\Sigma}(X,Y,Z) = \sum_{Q \in \mathcal{Q}_{G}} T_{G_{Q}}(X,Z^{-1}) T_{G^{*}_{Q^{*}}}(Y,Z) Z^{n(R_{\mathcal{VE}(Q)})-n(G_{Q})}
\]
\end{thm}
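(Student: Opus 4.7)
The plan is to specialize the main quasi-tree expansion, Theorem \ref{quasi}, using the identification of the Las Vergnas polynomial as an evaluation of $\pol_{G,\Sigma}$ provided by Theorem \ref{genkruver}. First I would combine
\[
LV_{G,\Sigma}(X,Y,Z) = Z^{\dis/2}\, \pol_{G,\Sigma}(X-1, Y-1, Z^{-1}, Z)
\]
with the quasi-tree expansion of Theorem \ref{quasi} applied with $A \mapsto Z^{-1}$ and $B \mapsto Z$, yielding, for each quasi-tree $Q$, a summand $T_{G_Q}(X-1, Z^{-1})\, T_{G^{*}_{Q^{*}}}(Y-1, Z)$ multiplied by $Z^{(\dis - s(F_{\mathcal{VI}(Q)}) + s(R_{\mathcal{VE}(Q)}))/2}$, once the $A$- and $B$-powers are collected with the prefactor $Z^{\dis/2}$.

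The remaining task is to show that this exponent of $Z$ simplifies to $n(R_{\mathcal{VE}(Q)}) - n(G_Q)$. For this I would expand each of $\dis$, $s(F_{\mathcal{VI}(Q)})$, and $s(R_{\mathcal{VE}(Q)})$ combinatorially, using $s(F) = 2c(F) - v(F) + e(F) - bc(F)$ for spanning subgraphs and $\dis = 2c(\Sigma) - \chi(\Sigma)$ with $\chi(\Sigma) = v(G) - e(G) + v(G^{*})$ (since $G$ is a cellulation and $v(G^{*}) = bc(G)$). The edge counts split cleanly along the six-fold partition $E(G) = \mathcal{DI}(Q) \sqcup \mathcal{I}_{o}(Q) \sqcup \mathcal{I}_{n}(Q) \sqcup \mathcal{DE}(Q) \sqcup \mathcal{E}_{o}(Q) \sqcup \mathcal{E}_{n}(Q)$, and Lemma \ref{bc} applied with $S = \mathcal{I}_{o}(Q) \cup \mathcal{E}_{o}(Q)$ controls $bc(F_{\mathcal{VI}(Q)})$ via $F_{\mathcal{VI}(Q) \cup S} = G$, while the dual application controls $bc(R_{\mathcal{VE}(Q)})$ via $R_{\mathcal{VE}(Q) \cup S^{c}} = G^{*}$. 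Finally, recognizing $v(G_{Q}) = c(F_{\mathcal{VI}(Q)})$, $e(G_{Q}) = |\mathcal{I}_{o}(Q)|$, $c(G_{Q}) = c(Q) = 1$, together with the analogous identities for $G^{*}_{Q^{*}}$ and $R_{\mathcal{VE}(Q)}$, lets the exponent be rewritten as $n(R_{\mathcal{VE}(Q)}) - n(G_{Q})$.

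The main obstacle is purely exponent bookkeeping: tracking the many $c$-, $v$-, $e$-, and $bc$-contributions from $\Sigma$, $F_{\mathcal{VI}(Q)}$, and $R_{\mathcal{VE}(Q)}$, and recognizing the cancellations produced by Lemma \ref{bc} and its dual, is tedious but entirely formal. This step is structurally the same as the exponent simplification carried out in the Bollob\'as--Riordan derivation earlier in the section, so the technique is already in place. Once the $Z$-exponent is identified, the theorem follows by reading the Tutte-polynomial factors off directly from the specialization of Theorem \ref{quasi}.
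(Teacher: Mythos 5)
Your overall strategy is the same as the paper's: substitute $A=Z^{-1}$, $B=Z$ into Theorem \ref{quasi}, absorb the prefactor $Z^{\dis/2}$ from Theorem \ref{genkruver}, and reduce everything to the single exponent identity
\[
\dis/2 - s(F_{\mathcal{VI}(Q)})/2 + s(R_{\mathcal{VE}(Q)})/2 \;=\; n(R_{\mathcal{VE}(Q)}) - n(G_{Q}).
\]
The paper settles this not by re-expanding all of $\dis$, $s(F_{\mathcal{VI}(Q)})$, $s(R_{\mathcal{VE}(Q)})$ from scratch, but by recycling the identity $s(F_{\mathcal{VI}(Q)\cup\mathcal{I}_{o}(Q)\cup\mathcal{E}_{o}(Q)}) = s(F_{\mathcal{VI}(Q)}) + 2n(G_{Q})$ from the proof of Theorem \ref{quasi}, the observation that $F_{\mathcal{VI}(Q)\cup\mathcal{I}_{o}(Q)\cup\mathcal{E}_{o}(Q)} = (R_{\mathcal{VE}(Q)})^{*}$ so that its $s$ is $s^{\perp}(R_{\mathcal{VE}(Q)})$, and then equation \eqref{comb} applied in $G^{*}$ together with connectedness of $(R_{\mathcal{VE}(Q)})^{*}$. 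Your brute-force expansion could be made to work, but two of the identities you lean on are false: $F_{\mathcal{VI}(Q)\cup\mathcal{I}_{o}(Q)\cup\mathcal{E}_{o}(Q)}$ is not $G$ (it omits every edge of $\mathcal{VE}(Q)=\mathcal{DE}(Q)\cup\mathcal{E}_{n}(Q)$), and with $S=\mathcal{I}_{o}(Q)\cup\mathcal{E}_{o}(Q)$ one has $S^{c}=\emptyset$, so $R_{\mathcal{VE}(Q)\cup S^{c}}$ is $R_{\mathcal{VE}(Q)}$ itself, not $G^{*}$. The correct consequence of Lemma \ref{bc} here is $bc(R_{\mathcal{VE}(Q)}) = bc(F_{\mathcal{VI}(Q)}) - |\mathcal{I}_{o}(Q)| + |\mathcal{E}_{o}(Q)|$, exactly as used in the Bollob\'as--Riordan derivation.

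The more serious problem is that the exponent does not simplify to $n(R_{\mathcal{VE}(Q)}) - n(G_{Q})$: correct bookkeeping gives $n(R_{\mathcal{VE}(Q)}) + n(G_{Q})$. Indeed $-s(F_{\mathcal{VI}(Q)})/2 = -s^{\perp}(R_{\mathcal{VE}(Q)})/2 + n(G_{Q})$, and \eqref{comb} in $G^{*}$ gives $\dis/2 + s(R_{\mathcal{VE}(Q)})/2 - s^{\perp}(R_{\mathcal{VE}(Q)})/2 = n(R_{\mathcal{VE}(Q)})$, so the total is $n(R_{\mathcal{VE}(Q)})+n(G_{Q})$. Test it on the standard one-vertex, two-loop cellulation of the torus with $Q=E(G)$: there $n(G_{Q})=1$, $n(R_{\mathcal{VE}(Q)})=0$, while $\dis/2 - s(F_{\mathcal{VI}(Q)})/2 + s(R_{\mathcal{VE}(Q)})/2 = 1-0+0=1$, and only the exponent $+1$ makes the two quasi-tree terms sum to $LV_{G,T^{2}}=(Z+1)^{2}$ (the exponent $-1$ would introduce negative powers of $Z$ that $LV$ cannot have). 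The published proof commits the same sign slip, in the line where $-n(G_{Q})$ is pulled out instead of $+n(G_{Q})$, so the statement you were handed is itself off by this sign; but a plan that promises to reach the stated exponent by ``entirely formal'' bookkeeping would fail at precisely that step. A smaller discrepancy of the same kind: with the paper's normalization $T_{G}(X,Y)=\sum_{F}X^{c(F)-c(G)}Y^{n(F)}$, the substitution yields $T_{G_{Q}}(X-1,Z^{-1})\,T_{G^{*}_{Q^{*}}}(Y-1,Z)$, as your own computation shows, not $T_{G_{Q}}(X,Z^{-1})\,T_{G^{*}_{Q^{*}}}(Y,Z)$; the one-loop cellulation of $S^{2}$ distinguishes the two. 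Both mismatches need to be confronted explicitly rather than deferred to routine cancellation.
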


\begin{proof}
Theorem \ref{quasi} and Theorem \ref{genkruver} combine to give
\begin{equation}
LV_{G,\Sigma}(X,Y,Z) = \sum_{Q \in \mathcal{Q}_{G}} T_{G_{Q}}(X,Z^{-1}) T_{G^{*}_{Q^{*}}}(Y,Z) Z^{\dis/2-s(F_{\mathcal{VI}(Q)})/2 + s(R_{\mathcal{VE}(Q)})/2}
\end{equation}
We see that the theorem will be true provided that 
\begin{equation}
n(R_{\mathcal{VE}(Q)})-n(G_{Q}) = \dis/2-s(F_{\mathcal{VI}(Q)})/2 + s(R_{\mathcal{VE}(Q)})/2
\end{equation}
Referring back to equation (ref), we have that
\begin{equation}
s(F_{\mathcal{VI}(Q) \cup \mathcal{I}_{o}(Q) \cup \mathcal{E}_{o}(Q)}) = s(F_{\mathcal{VI}(Q)}) + 2n(G_{Q})
\end{equation}
Thus, using equation \ref{comb},
\begin{align*}
\dis/2-s(F_{\mathcal{VI}(Q)})/2 + s(R_{\mathcal{VE}(Q)})/2 &= (\dis/2 - s(F_{\mathcal{VI}(Q) \cup \mathcal{I}_{o}(Q) \cup \mathcal{E}_{o}(Q)})/2 + s(R_{\mathcal{VE}(Q)})/2) -n(G_{Q}) \\
&= (\dis/2 - s^{\perp}(R_{\mathcal{VE}(Q)})/2 + s(R_{\mathcal{VE}(Q)})/2) -n(G_{Q}) \\
&= n(R_{\mathcal{VE}(Q)})-c((R_{\mathcal{VE}(Q)})^{*}) + 1 - n(G_{Q})
\end{align*}

We have $(R_{\mathcal{VE}(Q)})^{*} = F_{\mathcal{VI}(Q) \cup \mathcal{I}_{o}(Q) \cup \mathcal{E}_{o}(Q)}$. Since $F_{\mathcal{VI}(Q) \cup \mathcal{I}_{o}(Q) \cup \mathcal{E}_{o}(Q)}$ contains the quasi-tree $F_{\mathcal{VI}(Q) \cup \mathcal{I}_{o}(Q)}$ as a subgraph, it is connected, so $c((R_{\mathcal{VE}(Q)})^{*}) = 1$.
\end{proof}
\bigskip

\end{document}